\documentclass[draft,reqno]{amsart}
\usepackage[american]{babel}
\usepackage{amssymb,euscript}
\sloppy
\numberwithin{equation}{section}
\newtheorem{theorem}{Theorem}[section]
\newtheorem{lemma}[theorem]{Lemma}

\theoremstyle{definition}
\newtheorem{definition}[theorem]{Definition}
\theoremstyle{remark}
\newtheorem{remark}[theorem]{Remark}

\def\Re{\operatorname{{Re}}}
\def\Im{\operatorname{{Im}}}

\DeclareMathOperator*{\supp}{supp}
\def\sgn{\operatorname{{sign}}}

\author[Faminskii]{Andrei V. Faminskii}
\address{RUDN University, 6 Miklukho-Maklaya Street, Moscow, 117198, Russian Federation}
\email{afaminskii@sci.pfu.edu.ru}
\subjclass{35Q55, 35Q53}
\keywords{higher order nonlinear Schr\"{o}dinger equation, initial-boundary value problem, global solution, well-posedness}
\thanks{This paper has been supported by Russian Science Foundation grant 23-11-00056.}

\title[Higher order nonlinear Schr\"{o}dinger equation]{Global weak solutions of an initial-boundary value problem on a half-line for the higher order nonlinear Schr\"{o}dinger equation}

\date{}

\begin{document}
\maketitle

\begin{abstract}
An initial-boundary value problem with one boundary condition is considered for the higher order nonlinear Schr\"odinger equation. It is assumed that either the boundary condition is homogeneous or the nonlinearity in the equation is quadratic. Results on existence, uniqueness and continuous dependence on input data of global  weak solutions are obtained.
\end{abstract}

\section{Introduction. Notation. Description of main results}\label{S1}

In the paper we consider the  higher order nonlinear Schr\"odinger equation (HNLS)
\begin{equation}\label{1.1}
i u_t  + a u_{xx} + i b u_x + i u_{xxx} + \lambda |u|^p u +
 i \beta \bigl( |u|^p u\bigr)_x + i \gamma \bigl( |u|^p\bigr)_x u= f(t,x),
\end{equation}
posed on the semiaxis $\mathbb R_+ = (0, +\infty)$.
Here $a$, $b$, $\lambda$, $\beta$, $\gamma$ are real constants, $p\geq 1$, $u=u(t,x)$ and $f$ are complex-valued functions (as well as all other functions below, unless otherwise stated).

For an arbitrary $T>0$ in a half-strip $\Pi_T^+ = (0,T) \times \mathbb R_+$ we study for this equation an initial-boundary value problem with an initial condition
\begin{equation}\label{1.2}
u(0,x) = u_0 (x),\quad x \geq 0,
\end{equation}
and a boundary condition
\begin{equation}\label{1.3}
u(t,0) = \mu(t), \quad t\in [0,T].
\end{equation}
We consider two cases: either $\mu\equiv 0$ or $p=1$, $\gamma=0$. In both cases results on existence, uniqueness and continuous dependence on the input data $u_0$, $f$, $\mu$ (in the second case) of global weak solutions are established.

Equation \eqref{1.1} is a generalized combination of the well-known nonlinear Schr\"odinger equation (NLS) in one spatial dimension
$$
i u_t +a u_{xx} + \lambda |u|^p u =0
$$
and the Korteweg--de~Vries equation (KdV)
$$
u_t+bu_x+u_{xxx}+\beta u u_x=0.
$$
It has various physical applications, in particular, it models propagation of femtosecond optical pulses in a monomode optical fiber, accounting for additional effects such as third order dispersion, self-steeping of the pulse, and self-frequency shift (see \cite{Fib, HK, Kod, KC} and the references therein).

Similarly to the NLS and KdV equations it can be shown that the norm in $L_2(\mathbb R)$ of a solution to the initial value problem for the HNLS equation is preserved, that is
$$
\frac{d}{dt} \int_{\mathbb R} |u|^2\, dx =0.
$$
It is well known, that for the NLS and KdV equations the following quantities, which are usually referred to as the energy, are also preserved by the solution flow:
$$
\int_{\mathbb R} \Bigl(|u_x|^2 -\frac{2\lambda}{a(p+2)} |u|^{p+2}\Bigr)\,dx,\quad 
\int_{\mathbb R} \Bigl(u_x^2 -\frac{\beta}{3} u^{3}\Bigr)\,dx.
$$
On the contrary, if one tries to derive the analogue of the conservation law for the energy in the case of the HNLS equation, the following identity is obtained:
\begin{multline*}
\frac{d}{dt} \int_{\mathbb R}\Bigl[ |u_x|^2  + \frac{i}{\beta+\gamma}\Bigl(\lambda - \frac{a(3\beta+2\gamma)}{3}\Bigr) u \bar u_x 
-\frac{2(3\beta +2\gamma)}{3(p+2)} |u|^{p+2}\Bigr]\, dx \\ -
\frac{\gamma}{3} \int_{\mathbb R} \bigl(|u|^p\bigr)_x \bigl(|u|^2\bigr)_{xx} \,dx =0.
\end{multline*}
Therefore, the integral of the expression in the square brackets is preserved only if either $\gamma=0$ or $p=2$,
since
$$
\int_{\mathbb R} \bigl(|u|^2\bigr)_x \bigl(|u|^2\bigr)_{xx} \,dx =0.
$$

Another serious obstacle in the study of such an equation is the non-smoothness of the function $|u|^p$ except the special cases of even natural values of $p$. As a result, just in the case $p=2$, that is for the equation
$$
i u_t + a u_{xx} + ib u_x +i u_{xxx} + \lambda |u|^2 u + i\beta \bigl(|u|^2 u\bigr)_x + i\gamma (|u|^2)_x u =0,
$$
for the first time were obtained results on local and global well-posedness of the initial value problem. In particular, in \cite{Laurey} local well-posedness was proved for the initial data $u_0$ in $H^s(\mathbb R)$, $s>3/4$, and global well-posedness in $H^s(\mathbb R)$, $s\geq 1$, if $\beta+\gamma\ne 0$. In \cite{Staf} the local result was improved up to $s\geq 1/4$. In \cite{CL} the same result was obtained for the analogous equation with variable (depending on $t$) coefficients. Under certain relation between the coefficients the local result was extended to the global one in \cite{Car06} if $s>1/4$. In \cite {CN} global well-posedness was proved in certain weighted subspaces of $H^2(\mathbb R)$, where power weights were effective at $\pm\infty$; the argument used global estimates from \cite{Laurey}. A unique continuation property was obtained in \cite{CP}. Local well-posedness of the periodic initial value problem in $H^{1/2}(\mathbb T)$ was established in \cite{Tak}. For the truncated NHLS equation ($\beta= \gamma =0$) local well posedness in $H^s(\mathbb R)$ for $s>-1/2$ was proved in \cite{Car04}.

For other values of $p$ the initial value problem for the HNLS equation was considered in the recent paper \cite{F23} in the case $p=1$, $\gamma=0$. Results on existence and uniqueness of global weak solutions in certain weighted subspaces of $L_2(\mathbb R)$ for power weights effective at $+\infty$ were established. Moreover in the presence of additional damping, effective at $\pm \infty$, exponential large-time decay of solutions was obtained.

In \cite{CCFSV} an initial-boundary value problem on a bounded interval $I$ for the HNLS equation was considered. In the case $p\in [1,2]$ and the initial function $u_0 \in H^s(I)$, $0\leq s \leq 3$, results on global existence and uniqueness of mild solutions were obtained. For $u_0\in L_2(I)$ the result on global existence was extended either to $p\in (2,3)$ or $p\in (2,4)$, $\gamma=0$. Moreover, after addition to the equation of the damping term $id(x)u$, where the non-negative function $d$ was strictly positive on a certain sub-interval, large-time decay of solutions was established.  Certain preceding results for the truncated HNLS equation ($\beta=\gamma=0$) can be found in \cite{ASV, BOY, BBV, BV, Chen, CCPV}.

The initial-boundary value problem on the semiaxis $\mathbb R_+$ is not studied yet, although it has transparent physical meaning, describing the waves moving from the boundary point. In recent paper \cite{AMO} local well-posedness results were established for the truncated NHLS equation ($\beta= \gamma =0$) in the case of the non-homogeneous boundary data \eqref{1.3}.
The results and methods of that paper are not used here.

\bigskip

Let $L_{q,+} = L_q(\mathbb R_+)$, $H^s_+ = H^s(\mathbb R_+)$, $C_{b,+}^k = C_b^k(\overline{\mathbb R}_+)$, $\EuScript S_+$  be the reduction on $\overline{\mathbb R}_+$ of the Schwartz space $ \EuScript S = \EuScript S(\mathbb R)$. The notation $C_w$ means a weakly continuous map, the subscript $b$ means a bounded map. 

The notion of a weak solution of the considered problem is understood in the following sense.

\begin{definition}\label{D1.1}
Let $u_0\in L_{2,+}$, $\mu\in L_2(0,T)$, $f\in L_1(0,T;L_{2,+})$. A function $u\in L_\infty(0,T;L_{2,+})$, $u_x\in L_2((0,T)\times (0,r))$ $\forall r>0$, is called a weak solution of problem \eqref{1.1}--\eqref{1.3} if for any function $\phi \in C^\infty([0,T]; \EuScript S_+)$, $\phi\big|_{t=T} =0$, $\phi\big|_{x=0} = \phi_x\big|_{x=0} \equiv 0$, the functions $|u|^p u \phi, |u|^p u \phi_x, \gamma|u|^p u_x \phi \in L_1(\Pi_T^+)$ and the following equality holds:
\begin{multline}\label{1.4}
\iint_{\Pi_T^+} \big[ u( i \phi_t - a \phi_{xx} + ib \phi_x  + i \phi_{xxx}) -\lambda |u|^p u \phi  + i\beta |u|^p u \phi_x  + i\gamma |u|^p (u\phi)_x +f\phi \bigr]\,dxdt  \\+
\int_{\mathbb R_+} u_0 \phi\big|_{t=0}\,dx  + i \int_0^T \mu \phi_{xx}\big|_{x=0} \, dt  =0.
\end{multline}
\end{definition}

In fact, the constructed in the paper solutions possess certain additional properties than the ones from this definition. Introduce certain notation.

We say that $\psi(x)$ is an admissible weight function if $\psi$ is an infinitely smooth positive function on $\overline{\mathbb R}_+$, such that 
\begin{equation}\label{1.5}
|\psi^{(j)}(x)|\leq c(j)\psi(x)\quad \text{for each natural\ } j \text{\ and\ } \forall x\geq 0.
\end{equation}
Note that such a function obviously satisfies an inequality 
\begin{equation}\label{1.6}
\psi(x) \leq \psi(0) e^{c(1) x} \quad \forall x\geq 0.
\end{equation}
It was shown in \cite{F12} that $\psi^s$ for any $s\in\mathbb R$ satisfies \eqref{1.5}, so $\psi^s$ is an admissible weight function. Any functions $\psi(x) \equiv e^{2\alpha x}$ as well as $\psi(x) \equiv (1+x)^{2\alpha}$, $\alpha\in \mathbb R$, are admissible weight functions, moreover, $\psi'(x)$ are also admissible weight functions if $\alpha>0$. As an another important example of admissible functions, we define $\rho_0(x)\equiv 1+ \frac{2}{\pi}\arctan x$. Note that both $\rho_0$ and $\rho'_0$ are admissible weight functions.

For an admissible weight function $\psi(x)$ let 
\begin{equation}\label{176}
L_{2,+}^{\psi(x)}= \{\varphi(x): \varphi\psi^{1/2}(x)\in L_{2,+}\}.
\end{equation}
Obviously, $L_{2,+}^{\rho_0(x)}=L_{2,+}$.

Let both $\psi(x)$ and $\psi'(x)$ be admissible weight functions. We construct solutions of the considered problem in spaces $X_w^{\psi(x)}(\Pi_T^+)$, consisting of functions $u(t,x)$ such that 
\begin{equation}\label{1.8}
u\in C_w([0,T]; L_{2,+}^{\psi(x)}), \quad u_x \in L_2(0,T;L_{2,+}^{\psi'(x)}).
\end{equation}
For auxiliary linear results we also use spaces $X^{\psi(x)}(\Pi_T^+)$, where in comparison with $X_w^{\psi(x)}(\Pi_T^+)$ the weak continuity with respect to $t$ in \eqref{1.8} is substituted by the strong one. 

Let also
\begin{equation}\label{1.9}
\sigma^+ (f;T) = \sup\limits_{x_0\geq 0} \Bigl(\int_0^T \int_{x_0}^{x_0+1} |f(t,x)|^2 \,dxdt \Bigr)^{1/2}.
\end{equation}

The main results of the paper are the following two theorems.

\begin{theorem}\label{T1.1}
Let $u_0\in L_{2,+}^{\psi(x)}$, $\mu\equiv 0$, $f\in L_1(0,T;  L_{2,+}^{\psi(x)})$ for certain admissible weight function $\psi(x)$, such that $\psi'(x)$ is also an admissible weight function. Let either $p\in [1,3)$ or $p\in [1,4)$, $\gamma=0$.
Then there exists a weak solution of problem \eqref{1.1}--\eqref{1.3} $u\in X_w^{\psi(x)}(\Pi_T^+)$; moreover $\sigma^+(u_{x};T) <+\infty$.  If, in addition, $p\leq 2$ and for certain positive constant $c_0$
\begin{equation}\label{1.10}
(\psi'(x))^{p+2} \psi^{p-2}(x)\geq c_0\quad \forall x\geq 0,
\end{equation}
then this solution is unique in $X_w^{\psi(x)}(\Pi_T^+)$  and the map
\begin{equation}\label{1.11}
(u_0, f) \to u
\end{equation}
is Lipschitz continuous on any ball in the space $L_{2,+}^{\psi(x)} \times  L_1(0,T;  L_{2,+}^{\psi(x)})$ into the space $ X_w^{\psi(x)}(\Pi_T^+)$.
\end{theorem}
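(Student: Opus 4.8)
The plan is to build the solution from the linear theory for the half-line problem (the auxiliary results on the spaces $X^{\psi(x)}(\Pi_T^+)$) by a vanishing-regularization argument, to control it through weighted energy estimates that use in an essential way the homogeneity $\mu\equiv0$ of the boundary condition together with the internal local smoothing of the term $iu_{xxx}$, and finally to read off uniqueness and Lipschitz dependence from a single weighted estimate for the difference of two solutions.

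\emph{A priori estimates.} These are the core of the argument. Multiplying \eqref{1.1} by $\bar u$, integrating over $\mathbb R_+$ and taking the imaginary part, the terms $au_{xx}$ and $ibu_x$ give only endpoint or purely imaginary contributions, the term $iu_{xxx}$ produces after integration by parts the dissipative boundary term $-|u_x(t,0)|^2$, the term $\lambda|u|^pu$ drops out, and every nonlinear endpoint term is a multiple of $|u(t,0)|^{p+2}=|\mu(t)|^{p+2}=0$ — this is the one place where $\mu\equiv0$ is genuinely indispensable. One obtains
$$\sup_{t\in[0,T]}\|u(t)\|_{L_{2,+}}^2+\int_0^T|u_x(t,0)|^2\,dt\le C\bigl(\|u_0\|_{L_{2,+}},\ \|f\|_{L_1(0,T;L_{2,+})}\bigr).$$
Multiplying instead by $\bar u\psi(x)$ and integrating by parts, the term $iu_{xxx}$ now contributes the local-smoothing gain $+3\int_{\mathbb R_+}|u_x|^2\psi'\,dx$ (here $\psi'>0$ is used) plus lower-order terms that are absorbed because $|\psi^{(j)}|\le c\psi$ and, $\psi'$ being admissible, also $|\psi^{(j)}|\le c\psi'$; the terms $au_{xx}$, $ibu_x$ contribute only quantities bounded by a small multiple of $\int|u_x|^2\psi'$ plus $C\int|u|^2\psi$; and the nonlinear terms, their endpoint contributions again killed by $\mu\equiv0$, reduce to expressions dominated by $C\int_{\mathbb R_+}|u|^{p+2}\psi'\,dx$. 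This last quantity is estimated by weighted embedding and interpolation inequalities in terms of $\|u\psi^{1/2}\|_{L_{2,+}}$ and $\|u_x(\psi')^{1/2}\|_{L_{2,+}}$; in the ranges $p\in[1,3)$ and, when $\gamma=0$, $p\in[1,4)$ (the $\gamma$-term being the most demanding), this contribution is strictly subordinate to the smoothing gain, so that, through a continuity argument, $\iint_{\Pi_T^+}|u_x|^2\psi'\,dx\,dt$ and $\sup_{t}\|u\psi^{1/2}\|_{L_{2,+}}$ stay finite for every $T$. Carrying out the weighted estimate with a family of translated weights (uniformly admissible, with derivative bounded below on each unit interval) gives $\sigma^+(u_x;T)<+\infty$.

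\emph{Existence.} I would regularize the problem — mollifying and truncating the non-smooth nonlinearity $|u|^p$ and, if needed, adjoining a small dissipative term compatible with the single boundary condition on the half-line — solve the regularized problem via the linear theory and a contraction (or successive-approximation) argument, note that the a priori estimates above hold uniformly in the regularization parameters, and pass to the limit. The uniform bound on $u_x$ in $L_2((0,T)\times(0,r))$ for every $r$, combined with the equation, gives relative compactness of the approximations in $L_2((0,T)\times(0,r))$ by an Aubin--Lions argument; since the nonlinear terms are of bounded power (at most $p+2$), this suffices to pass to the limit in them, to verify the integrability requirements of Definition~\ref{D1.1}, and to establish the integral identity \eqref{1.4}. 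Weak-$*$ compactness yields $u\in L_\infty(0,T;L_{2,+}^{\psi(x)})$, which is upgraded to weak continuity in $t$ in the usual way, so $u\in X_w^{\psi(x)}(\Pi_T^+)$.

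\emph{Uniqueness and continuous dependence.} Let $u_1,u_2\in X_w^{\psi(x)}(\Pi_T^+)$ solve the problem with data $(u_{0,1},f_1)$ and $(u_{0,2},f_2)$; since $u_k(t,0)=\mu(t)$ for both, the difference $w=u_1-u_2$ satisfies $w(t,0)=0$. Running the weighted estimate with weight $\psi$ for $w$, all endpoint terms vanish, and the difference of the nonlinear terms — treated through $\bigl||a|^pa-|b|^pb\bigr|\le C(|a|^p+|b|^p)|a-b|$ and, after an integration by parts, its $x$-derivative analogue — is, by Hölder's inequality, dominated by a small multiple of $\int|w_x|^2\psi'\,dx$ plus $g(t)\int|w|^2\psi\,dx$ with $g\in L_1(0,T)$; here the bounds on $u_1,u_2$ from the preceding steps enter, and the hypothesis \eqref{1.10}, $(\psi'(x))^{p+2}\psi^{p-2}(x)\ge c_0$, is precisely the weight comparison that lets the weak smoothing $\int|w_x|^2\psi'$ absorb the derivative nonlinearities, while the restriction $p\le2$ makes the resulting factor $g$ integrable in time. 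Gronwall's inequality then gives
$$\sup_{t\in[0,T]}\bigl\|w(t)\psi^{1/2}\bigr\|_{L_{2,+}}^2+\iint_{\Pi_T^+}|w_x|^2\psi'\,dx\,dt\le C\Bigl(\|u_{0,1}-u_{0,2}\|_{L_{2,+}^{\psi(x)}}^2+\|f_1-f_2\|_{L_1(0,T;L_{2,+}^{\psi(x)})}^2\Bigr),$$
with $C$ depending only on the radius of a ball containing the data; taking $u_{0,1}=u_{0,2}$ and $f_1=f_2$ gives uniqueness, and the general inequality is the claimed Lipschitz continuity of \eqref{1.11}. I expect the main obstacle to be exactly these weighted nonlinear estimates — for existence, making the $L^{p+2}$-type terms strictly subordinate to the smoothing gain in the admissible range of $p$; for uniqueness, fitting the derivative nonlinearities into the rather weak smoothing actually at hand, namely $u_x\in L_2$ against the (possibly decaying) weight $\psi'$, which is what \eqref{1.10} is calibrated to permit — while the non-smoothness of $|u|^p$ and the merely local integrability of $u_x$, to be accommodated throughout the regularization and the passage to the limit, form the accompanying technical nuisance.
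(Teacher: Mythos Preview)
Your proposal is correct and follows essentially the same strategy as the paper: truncate the nonlinearity by a smooth cut-off $g_h$, solve the regularized problem by contraction in an exponentially weighted $X$-space, derive uniform weighted energy identities (first with $\rho\equiv1$, then $\rho\equiv\psi$ via the interpolation $\int|u|^{p+2}\psi'\le c\bigl(\int(|u_x|^2+|u|^2)\psi'\bigr)^{p/4}\bigl(\int|u|^2\psi'\bigr)^{(4-p)/4}\|u\|_{L_{2,+}}^{p}$, then translated $\rho_0(\cdot-x_0)$ for $\sigma^+$), extract a limit by Simon's compactness, and for uniqueness run the weighted estimate on the difference with \eqref{1.10} rewritten as $(\psi/\psi')^{1/2}\le c(\psi'\psi)^{p/4}$ and Gronwall. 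One small correction: the split $p<4$ versus $p<3$ does not arise in the a priori estimate---there the $\gamma$- and $\beta$-contributions integrate by parts to the same form $\int|u|^{p+2}\psi'$, giving $p<4$ in either case---but in the passage to the limit, where for $\gamma\neq0$ one must show $g_h(|u_h|)u_{hx}\to|u|^p u_x$ in $L_{1,\mathrm{loc}}$, and this step needs $p<3$.
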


\begin{remark}\label{R1.1}
The exponential weight $\psi(x)\equiv e^{2\alpha x}$ $\forall \alpha>0$ and the power weight $\psi(x)\equiv (1+x)^{2\alpha}$, $\alpha\geq (p+2)/(4p)$, satisfy the hypothesis of the theorem (including uniqueness). If either $p\in [1,3)$ or $p\in [1,4)$ and $\gamma=0$, $u_0\in L_{2,+}$, $f\in L_1(0,T;L_{2,+})$, there exists a weak solution $u\in C_w([0,T];L_{2,+})$, $\sigma^+(u_{x};T) <+\infty$. 
\end{remark}

\begin{theorem}\label{T1.2}
Let $u_0\in L_{2,+}^{\psi(x)}$, $\mu \in H^s(0,T)$, $s>1/3$ , $f\in L_1(0,T;  L_{2,+}^{\psi(x)})$ for certain admissible weight function $\psi(x)$, such that $\psi'(x)$ is also an admissible weight function. Let $p =1$, $\gamma=0$.
Then there exists a weak solution of problem \eqref{1.1}--\eqref{1.3} $u\in X_w^{\psi(x)}(\Pi_T^+)$; moreover $\sigma^+(u_{x};T) <+\infty$.  If, in addition, inequality \eqref{1.10} holds for $p=1$, then this solution is unique in $X_w^{\psi(x)}(\Pi_T^+)$  and the map
\begin{equation}\label{1.12}
(u_0, \mu, f)  \to u
\end{equation}
is Lipschitz continuous on any ball in the space $L_{2,+}^{\psi(x)} \times H^s(0,T) \times  L_1(0,T;  L_{2,+}^{\psi(x)})$ into the space $ X_w^{\psi(x)}(\Pi_T^+)$.
\end{theorem}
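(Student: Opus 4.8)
The plan is to reduce Theorem~\ref{T1.2} to the situation already settled in Theorem~\ref{T1.1} by removing the boundary datum $\mu$. First I would solve the linear boundary-forcing problem
$$
iW_t+aW_{xx}+ibW_x+iW_{xxx}=0,\qquad W\big|_{t=0}=0,\qquad W\big|_{x=0}=\mu,
$$
using the auxiliary linear estimates for the linearized equation: for $\mu\in H^s(0,T)$ with $s>1/3$ they produce $W=\mathcal W\mu\in X^{\psi(x)}(\Pi_T^+)$ with $\sigma^+(W_x;T)<+\infty$ and $\|W\|_{X^{\psi(x)}(\Pi_T^+)}+\sigma^+(W_x;T)\le c\|\mu\|_{H^s(0,T)}$, the threshold $s>1/3$ being exactly the trace regularity dictated by the third-order dispersion. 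Then $u$ is sought in the form $u=v+W$, so that $v$ solves a nonlinear problem of the same type as in Theorem~\ref{T1.1}: homogeneous boundary condition $v\big|_{x=0}=0$, initial datum $v\big|_{t=0}=u_0$, nonlinearity $|u|u=|v+W|(v+W)$, forcing $f$, with all $W$-dependent contributions regarded as known source terms controlled by $\|\mu\|_{H^s(0,T)}$.

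For local existence I would rerun the contraction argument of Theorem~\ref{T1.1} for the Duhamel integral equation attached to $v$, in a ball of $\{v\in X^{\psi(x)}(\Pi_T^+):\sigma^+(v_x;T)<+\infty\}$. The linear inputs are the same two estimates used there: solvability of $iv_t+av_{xx}+ibv_x+iv_{xxx}=g$ with homogeneous boundary data, and the smoothing estimate for the Duhamel operator applied to a divergence-form source $g_x$ (controlled by a $\sigma^+$-type quantity of $g$), which absorbs the derivative in $i\beta(|u|u)_x$. The nonlinear estimates are the $p=1$ instances of those in Theorem~\ref{T1.1}: from $\bigl||v+W|(v+W)-|\widetilde v+W|(\widetilde v+W)\bigr|\le c\bigl(|v|+|\widetilde v|+|W|\bigr)|v-\widetilde v|$ together with the one-dimensional Gagliardo--Nirenberg inequality in the weighted spaces one extracts a small factor $T^{\theta}$ for small $T$, while the $W$-dependent terms cost only $\|\mu\|_{H^s(0,T)}$. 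Partitioning $[0,T]$ and combining with the a priori bound below gives the global $v\in X_w^{\psi(x)}(\Pi_T^+)$, hence $u=v+W\in X_w^{\psi(x)}(\Pi_T^+)$ with $\sigma^+(u_x;T)<+\infty$; passing to the limit in the regularizations yields the integral identity \eqref{1.4}, the term $i\int_0^T\mu\phi_{xx}\big|_{x=0}\,dt$ arising from integration by parts in the third-order term applied to $W$.

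The global a priori bound comes, as for Theorem~\ref{T1.1}, from the weighted $L_2$ energy identity for $v$: multiply the equation for $v$ by $\overline v\,\psi(x)$, integrate over $\mathbb R_+$, take imaginary parts (rigorously on the smooth approximations). Since $v\big|_{x=0}=0$, the dangerous boundary contributions at $x=0$ (those containing $v_{xx}(t,0)$) drop out, while the third-order term leaves the favourable quantities $c\int_{\mathbb R_+}|v_x|^2\psi'\,dx$ and $c|v_x(t,0)|^2$ with the right sign. The commutators generated by $\psi$ are controlled because $|\psi^{(j)}|\le c(j)\psi$ and $|\psi^{(j+1)}|\le c(j)\psi'$ (the latter since $\psi'$ is admissible); the cubic terms $\int|v|^3\psi'\le c\int|v|^3\psi$ produced by $i\beta(|v|v)_x$ and by $ibv_x$ are absorbed into $\varepsilon\int|v_x|^2\psi'$ by Gagliardo--Nirenberg (here $p=1<4$ is amply within range); the cross terms with $W$ are handled by the same tools plus the bounds on $W$, $W_x$, $\sigma^+(W_x;T)$. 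This leads to a differential inequality $\frac{d}{dt}\|v(t)\|_{L_{2,+}^{\psi(x)}}^2\le C\|v(t)\|_{L_{2,+}^{\psi(x)}}^2+C\bigl(\|f(t)\|_{L_{2,+}^{\psi(x)}}\|v(t)\|_{L_{2,+}^{\psi(x)}}+h(t)\bigr)$ with $\int_0^T h\,dt$ controlled by the data and $\|\mu\|_{H^s(0,T)}$, and Gronwall's lemma closes the global estimate.

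Uniqueness and the Lipschitz property of $(u_0,\mu,f)\mapsto u$ under the extra hypothesis \eqref{1.10} (with $p=1$) follow from the energy estimate for the difference $w=u^{(1)}-u^{(2)}$. Writing $w=\widetilde w+\mathcal W(\mu^{(1)}-\mu^{(2)})$ to homogenize the boundary datum and testing the equation for $\widetilde w$ with $\overline{\widetilde w}\,\psi$, the main point is to absorb the difference of the derivative nonlinearities, whose worst piece is $\int\bigl(|u^{(1)}|+|u^{(2)}|\bigr)|w|\,|\widetilde w_x|\,\psi\,dx\le\varepsilon\int|\widetilde w_x|^2\psi'\,dx+C\int\bigl(|u^{(1)}|^2+|u^{(2)}|^2\bigr)|w|^2\frac{\psi^2}{\psi'}\,dx$. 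The last integral, bounded by $\bigl\|(|u^{(1)}|^2+|u^{(2)}|^2)\psi/\psi'\bigr\|_{L_\infty(\mathbb R_+)}\int|w|^2\psi\,dx$, is exactly where \eqref{1.10} enters: the $L_\infty$ factor is estimated by a weighted one-dimensional Gagliardo--Nirenberg inequality in terms of $\|u^{(j)}\|_{L_{2,+}^{\psi(x)}}$ and $\|u^{(j)}_x\|_{L_2(0,T;L_{2,+}^{\psi'(x)})}$, which is possible precisely when $(\psi')^{p+2}\psi^{p-2}\ge c_0$ (note that this condition also forces $\psi\psi'$ to be bounded below, which takes care of the remaining difference terms). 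Gronwall then gives $\|\widetilde w\|_{X^{\psi(x)}}$, hence $\|u^{(1)}-u^{(2)}\|_{X^{\psi(x)}}$, controlled by the norms of the data differences, whence uniqueness and the asserted Lipschitz continuity; the $H^s$-dependence on $\mu$ is inherited from the linear bound on $\mathcal W(\mu^{(1)}-\mu^{(2)})$. I expect this final step — closing the weighted energy estimate for the difference in the presence of the derivative nonlinearity, which is what forces condition \eqref{1.10} — to be the main obstacle; the other genuinely new ingredient compared with Theorem~\ref{T1.1} is the boundary-forcing estimate at regularity $s>1/3$.
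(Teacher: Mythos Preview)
Your overall strategy matches the paper's: subtract a boundary potential to reduce to homogeneous boundary data, obtain local solutions by contraction, globalize via weighted $L_2$ energy identities, and prove uniqueness/Lipschitz dependence from the weighted energy estimate for the difference under \eqref{1.10}. The uniqueness argument you sketch is exactly Theorem~\ref{T4.1} specialized to $p=1$.

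Two points where the paper's execution differs from your outline are worth flagging. First, the paper does not use an exact linear solution $W$ of the boundary problem; it takes instead $\Psi_0(t,x)=[\mu_0(t)+J^+(t,x;\mu_1)]\,\eta(2-x)$ (Lemma~\ref{L2.6}), which is \emph{compactly supported in $x$}. This makes $\Psi_0\in L_{2,+}^{\psi(x)}$ automatic for every admissible $\psi$ and generates only a smooth compactly supported forcing $F_0$; with your uncut $W$ you would have to match the exponential decay rate of $J^+$ in \eqref{2.40} against the growth bound \eqref{1.6} on $\psi$, which is not guaranteed. Second, the paper does not run contraction on the true quadratic nonlinearity: it truncates to $g_h\in C_b^1$ so that Lemma~\ref{L3.1} applies with linear bounds, proves the a~priori estimates \eqref{3.44}--\eqref{3.45} uniformly in $h$, and passes to the limit $h\to 0$.

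There is one genuine gap in your a~priori step. The weighted energy inequality does \emph{not} close by itself into the linear Gronwall form you wrote. Via \eqref{3.19} with $p=1$ the cubic term satisfies
\[
\int |u|^{3}\psi'\,dx \le c\Bigl(\int(|u_x|^2+|u|^2)\psi'\,dx\Bigr)^{1/4}\Bigl(\int|u|^2\psi'\,dx\Bigr)^{3/4}\Bigl(\int|u|^2\,dx\Bigr)^{1/2},
\]
and after Young's inequality the residual power of $\int|u|^2\psi\,dx$ is superlinear unless the unweighted factor $\int|u|^2\,dx$ is already known to be bounded. The paper therefore first takes $\rho\equiv 1$ in \eqref{3.41} (where the nonlinear terms drop out entirely thanks to \eqref{3.39} and the boundary term $g^*(|\mu|^2)$ is controlled by \eqref{3.43}) to obtain \eqref{3.44}, and only then chooses $\rho=\psi$ to get \eqref{3.45}. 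You should insert this unweighted step before your weighted Gronwall argument.
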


\begin{remark}\label{R1.2}
The exponential weight $\psi(x)\equiv e^{2\alpha x}$ $\forall \alpha>0$ and the power weight $\psi(x)\equiv (1+x)^{2\alpha}$, $\alpha\geq 3/4$, satisfy the hypothesis of the theorem (including uniqueness). If $p =1$, $\gamma=0$, $u_0\in L_{2,+}$, $\mu \in H^s(0,T)$ for $s>1/3$, $f\in L_1(0,T;L_{2,+})$, there exists a weak solution $u\in C_w([0,T];L_{2,+})$, $\sigma^+(u_{x};T) <+\infty$. 
\end{remark}

\begin{remark}\label{R1.3}
For $u_0\in L_{2,+}$ the smoothness assumption on the boundary data $\mu \in H^{1/3}(0,T)$ is natural and originates from the properties of the differential operator $\partial_t + \partial_x^3$, so from this point of view the result, obtained in Theorem \ref{T1.2}, is $\varepsilon$-close to natural. The same $\varepsilon$-close to natural result for global weak solutions was previously obtained for the similar initial-boundary value problem for the KdV equation itself in comparison with natural assumptions for the local result (for more details see, for example, \cite{F12} and references therein).  
\end{remark}

\begin{remark}\label{R1.4}
The constraint $p=1$ in the non-homogeneous case originates from the method of the proof of the basic global estimate on the solution in the space $L_{2,+}$ (see, for example, \cite{F12} for more details). For local results this constraint is not used (see \cite{AMO}).
\end{remark}

Further, the symbol $\eta(x)$ denotes a cut-off function, namely, $\eta$ is an infinitely smooth non-decreasing function on $\mathbb R$ such that $\eta(x)=0$ for $x\leq 0$, $\eta(x)=1$ for $x\geq 1$, $\eta(x)+\eta(1-x) \equiv 1$.

Let $\EuScript S_{exp,+} =\EuScript S_{exp}(\overline{\mathbb R}_+)$ be a space of infinitely smooth functions $\varphi(x)$ on $\overline{\mathbb R}_+$,  such that $e^{n x}|\varphi^{(j)}(x)|\leq c(n,j)$ for any non-negative integers $n$, $j$  and $x\in \overline{\mathbb R}_+$.

In what follows we drop the limits in the integrals over $\mathbb R_+$. 

The paper is organized as follows.  Section~\ref{S2} contains certain preliminaries, in particular, on the corresponding auxiliary linear problem and interpolating inequalities. In Section~\ref{S3} the results on existence of solutions to the original problem, and in Section~\ref{S4} --- on uniqueness and continuous dependence are established. 

\section{Preliminaries}\label{S2}

The following interpolating inequality for weighted Sobolev spaces is crucial for the study.

\begin{lemma}\label{L2.1}
Let $\psi_1(x)$, $\psi_2(x)$ be two admissible weight functions, $q\in [2,+\infty]$,
\begin{equation}\label{2.1}
s = s(q) = \frac{1}4 - \frac1{2q}.
\end{equation}
Then for every function $\varphi(x)$ satisfying $\bigl(|\varphi'| +|\varphi|\bigr)\psi_1^{1/2}(x)\in L_{2,+}$, $\varphi\psi_2^{1/2}(x)\in L_{2,+}$, the following inequality holds:
\begin{equation}\label{2.2}
\bigl\| \varphi\psi_1^s \psi_2^{1/2-s}\bigr\|_{L_{q,+}} \leq c
\bigl\|\bigl(|\varphi'| + |\varphi|\bigr)\psi_1^{1/2}\bigr\|^{2s}_{L_{2,+}}
\bigl\|\varphi\psi_2^{1/2}\bigr\|_{L_{2,+}}^{1-2s}, 
\end{equation}
where the constant $c$ depends on $q$ and the properties of the functions $\psi_j$.
\end{lemma}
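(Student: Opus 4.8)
The plan is to reduce the weighted half-line inequality \eqref{2.2} to the classical Gagliardo--Nirenberg inequality on the whole line, namely $\|g\|_{L_{q}(\mathbb R)} \leq c\|g'\|_{L_{2}(\mathbb R)}^{2s}\|g\|_{L_{2}(\mathbb R)}^{1-2s}$ with $s=s(q)$ as in \eqref{2.1}, which covers the endpoints $q=2$ ($s=0$) and $q=\infty$ ($s=1/4$) as well. First I would introduce the function $g(x) = \varphi(x)\psi_1^{s}(x)\psi_2^{1/2-s}(x)$ on $\overline{\mathbb R}_+$, extend it to all of $\mathbb R$ (for instance by a bounded extension operator, or more simply by reflection, using the cut-off $\eta$ to handle the behaviour near $x=0$), and observe that $\|g\|_{L_{q}(\mathbb R_+)}$ is exactly the left-hand side of \eqref{2.2}. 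The reflection/extension must be done so that the $L_2$ norms of $g$ and $g'$ on $\mathbb R$ are controlled by the corresponding norms on $\mathbb R_+$; this is where the term $|\varphi|$ (not just $|\varphi'|$) on the right-hand side of \eqref{2.2} gets used, since after multiplying by a cut-off one picks up an undifferentiated term.

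The heart of the matter is then to estimate $\|g\|_{L_2(\mathbb R_+)}$ and $\|g'\|_{L_2(\mathbb R_+)}$ by the two factors on the right of \eqref{2.2}. For the first, write $|g| = |\varphi|\psi_1^{s}\psi_2^{1/2-s} = \bigl(|\varphi|\psi_1^{1/2}\bigr)^{2s}\bigl(|\varphi|\psi_2^{1/2}\bigr)^{1-2s}$, so that by H\"older's inequality with exponents $1/(2s)$ and $1/(1-2s)$,
\begin{equation*}
\|g\|_{L_2(\mathbb R_+)} \leq \bigl\||\varphi|\psi_1^{1/2}\bigr\|_{L_{2,+}}^{2s}\bigl\||\varphi|\psi_2^{1/2}\bigr\|_{L_{2,+}}^{1-2s},
\end{equation*}
which is already dominated by the right-hand side of \eqref{2.2}. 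For $g'$, differentiate the product: $g' = \varphi'\psi_1^{s}\psi_2^{1/2-s} + s\varphi\psi_1^{s-1}\psi_1'\psi_2^{1/2-s} + (1/2-s)\varphi\psi_1^{s}\psi_2^{-1/2-s}\psi_2'$. Using the admissibility bounds \eqref{1.5}, $|\psi_1'|\leq c\psi_1$ and $|\psi_2'|\leq c\psi_2$, the last two terms are pointwise bounded by $c|\varphi|\psi_1^{s}\psi_2^{1/2-s}$, i.e. by $c|g|$, and the first is bounded by $|\varphi'|\psi_1^{s}\psi_2^{1/2-s}$; hence $|g'|\leq c\bigl(|\varphi'|+|\varphi|\bigr)\psi_1^{s}\psi_2^{1/2-s}$. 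Writing $\bigl(|\varphi'|+|\varphi|\bigr)\psi_1^{s}\psi_2^{1/2-s} = \bigl((|\varphi'|+|\varphi|)\psi_1^{1/2}\bigr)^{2s}\bigl((|\varphi'|+|\varphi|)\psi_2^{1/2}\bigr)^{1-2s}$ and applying H\"older again — noting that $\bigl\|(|\varphi'|+|\varphi|)\psi_2^{1/2}\bigr\|_{L_{2,+}} \leq c\bigl\|(|\varphi'|+|\varphi|)\psi_1^{1/2}\bigr\|_{L_{2,+}}^{?}\cdots$ — one must be slightly careful: the cleaner route is to bound $\|g'\|_{L_{2,+}} \leq c\bigl\|(|\varphi'|+|\varphi|)\psi_1^{1/2}\bigr\|_{L_{2,+}}^{2s}\bigl\|(|\varphi'|+|\varphi|)\psi_2^{1/2}\bigr\|_{L_{2,+}}^{1-2s}$ and then absorb $|\varphi'|\psi_2^{1/2}$ — but this is not controlled by the right-hand side of \eqref{2.2}.

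Therefore the correct strategy for $g'$ is different: one keeps the splitting $g' = (\text{term with }\varphi') + (\text{terms with }\varphi)$, estimates $\|g\|_{L_2}$ and the $\varphi$-terms of $g'$ purely by the product $\|\cdot\psi_1^{1/2}\|^{2s}\|\cdot\psi_2^{1/2}\|^{1-2s}$ as above, and for the $\varphi'$-term writes $|\varphi'|\psi_1^{s}\psi_2^{1/2-s} = \bigl(|\varphi'|\psi_1^{1/2}\bigr)^{2s}\bigl(|\varphi'|\psi_1^{1/2}\bigr)^{1-2s}(\psi_2/\psi_1)^{(1/2-s)(1-2s)}\cdot\ldots$ — no; the genuinely correct device, which is standard in Faminskii's weighted-estimate papers, is to prove the inequality first for $\psi_2\equiv\psi_1$ (pure weight $\psi_1$, where $g=\varphi\psi_1^{1/2}$ and the computation above closes cleanly because only $\psi_1$ appears), obtaining $\|\varphi\psi_1^{1/2}\|_{L_{q,+}}\leq c\|(|\varphi'|+|\varphi|)\psi_1^{1/2}\|_{L_{2,+}}^{2s}\|\varphi\psi_1^{1/2}\|_{L_{2,+}}^{1-2s}$, and then to interpolate between the two weights. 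The main obstacle, as this discussion shows, is precisely the bookkeeping that isolates the contribution of $|\varphi'|$ so that it only ever appears paired with $\psi_1^{1/2}$ and never with $\psi_2^{1/2}$; once the pure-weight case is settled, the two-weight case follows by applying it to $\varphi$ with weight $\psi_1$ on the "high" exponent and estimating the remaining factor $\psi_1^{s}\psi_2^{1/2-s}/\psi_1^{1/2} = (\psi_2/\psi_1)^{1/2-s}$ against $\|\varphi\psi_2^{1/2}\|/\|\varphi\psi_1^{1/2}\|$ via H\"older, which is routine. I would present the pure-weight case in full and then indicate the two-weight reduction.
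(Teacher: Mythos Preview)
There is a genuine gap. You correctly spot the obstacle: if you set $g=\varphi\psi_1^{s}\psi_2^{1/2-s}$ and try to feed it into Gagliardo--Nirenberg, then $g'$ contains the term $\varphi'\psi_1^{s}\psi_2^{1/2-s}$, and $\bigl\|\varphi'\psi_1^{s}\psi_2^{1/2-s}\bigr\|_{L_{2,+}}$ is simply not controlled by the right-hand side of \eqref{2.2}, since it would require $\varphi'\psi_2^{1/2}\in L_{2,+}$. Your proposed workaround (prove the pure-weight case $\psi_1=\psi_2$, then ``interpolate'') does not close this gap: from $\|\varphi\psi_1^{1/2}\|_{L_{q,+}}\leq c\,A^{2s}B_1^{1-2s}$ with $A=\bigl\|(|\varphi'|+|\varphi|)\psi_1^{1/2}\bigr\|_{L_{2,+}}$ and $B_j=\|\varphi\psi_j^{1/2}\|_{L_{2,+}}$, there is no H\"older argument that replaces $B_1^{1-2s}$ by $B_2^{1-2s}$, because the factor $(\psi_2/\psi_1)^{1/2-s}$ is a pointwise quantity with no a priori bound, while $B_2/B_1$ is a ratio of integrals. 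The sentence ``estimating the remaining factor $(\psi_2/\psi_1)^{1/2-s}$ against $\|\varphi\psi_2^{1/2}\|/\|\varphi\psi_1^{1/2}\|$ via H\"older'' does not describe a valid step.

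The missing idea is to work with the \emph{square} of $\varphi$. For $q=\infty$ the paper applies $\sup_{x>0}|f|\leq\int|f'|\,dx$ to $f=\varphi^{2}\psi_1^{1/2}\psi_2^{1/2}$. Then $f'=2\varphi\varphi'\psi_1^{1/2}\psi_2^{1/2}+(\text{lower order})$, and the leading term is handled by Cauchy--Schwarz:
\[
\int|\varphi\varphi'|\,\psi_1^{1/2}\psi_2^{1/2}\,dx\ \leq\ \Bigl(\int|\varphi'|^{2}\psi_1\,dx\Bigr)^{1/2}\Bigl(\int|\varphi|^{2}\psi_2\,dx\Bigr)^{1/2}.
\]
This is exactly the manoeuvre that forces $\varphi'$ to pair with $\psi_1$ and $\varphi$ with $\psi_2$; the squaring is what makes it possible, and it is absent from your argument. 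The case $q<\infty$ is then a one-line interpolation between $q=\infty$ and $q=2$:
\[
\int|\varphi|^{q}\psi_1^{qs}\psi_2^{q(1/2-s)}\,dx\ \leq\ \bigl(\sup|\varphi|\psi_1^{1/4}\psi_2^{1/4}\bigr)^{q-2}\int|\varphi|^{2}\psi_2\,dx,
\]
where the exponents match because $qs=(q-2)/4$ and $q(1/2-s)=(q+2)/4$. No extension to $\mathbb R$ and no pure-weight intermediate step are needed.
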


\begin{proof}
Without loss of generality, assume that $\varphi$ is a smooth decaying at $+\infty$ function (for example, $\varphi\in \EuScript S_{exp,+}$).

First consider the  case $q=+\infty$, then the proof is based on a simple inequality for functions $f\in W_{1,+}^1$
$$
\sup\limits_{x>0} |f(x)| \leq \int |f'(x)|\,dx, 
$$
applied to $f \equiv \varphi^2 \psi_1^{1/2} \psi_2^{1/2}$.
In fact, the properties of admissible weight functions imply that
\begin{multline*}
\sup\limits_{x>0} |\varphi^2 \psi_1^{1/2} \psi_2^{1/2}| \leq 2\int |\varphi \varphi'| \psi_1^{1/2} \psi_2^{1/2} \,dx + c\int |\varphi|^2 \psi_1^{1/2} \psi_2^{1/2} \,dx \\ \leq
c_1\Bigl(\int \bigl( |\varphi'|^2 + |\varphi|^2\bigr) \psi_1 \,dx\Bigr)^{1/2} \Bigl(\int |\varphi|^2 \psi_2 \,dx \Bigr)^{1/2},
\end{multline*}
whence the desired result follows.

If $q<+\infty$ then
\begin{multline*}
\|\varphi \psi_1^s \psi_2^{1/2-s} \|_{L_{q,+}} \leq \Bigl( \sup\limits_{x>0} |\varphi| \psi_1^{1/4} \psi_2^{1/4} \Bigr)^{(q-2)/q} \Bigl( \int |\varphi|^2 \psi_2 \,dx \Bigr)^{1/q} \\ \leq
\Bigl( \int \bigl(|\varphi'|^2 + |\varphi|^2 \bigr)\psi_1 \,dx \Bigr)^{(q-2)/(4q)} \Bigl( \int |\varphi|^2 \psi_2 \,dx \Bigr)^{(q+2)/(4q)}.
\end{multline*}
\end{proof}

Inequality \eqref{2.2} is the generalization for weighted spaces on $\mathbb R_+$ of the well-known interpolating inequality
\begin{equation}\label{2.3}
\|\varphi\|_{L_q(I)} \leq c(q,I) \bigl(\|\varphi'\|^{2s} \|\varphi\|_{L_2(I)}^{1-2s} + \|\varphi\|_{L_2(I)}\bigr),
\end{equation}
where $s$ is given by \eqref{2.1}, valid for $q\in [2,+\infty]$ and any interval $I \subset \mathbb R$.

\bigskip

Consider a linear problem in $\Pi_T^+$
\begin{gather}\label{2.4}
i u_t +a u_{xx} +i b u_x+i u_{xxx}  = f(t,x),\\ 
\label{2.5}
u(0,x) = u_0(x),\quad u(t,0) = \mu(t).
\end{gather}

\begin{lemma}\label{L2.2}
Let $u_0 \in \EuScript S_{exp,+}$, $\mu \in C^\infty[0,T]$, $f\in C^\infty([0,T];\EuScript S_{exp,+})$ and 
\begin{equation}\label{2.6}
\mu^{(l)}(0) = \widetilde\Phi_l(0) \quad \forall l,
\end{equation}
where
\begin{equation}\label{2.7}
\widetilde\Phi_0(x) \equiv u_0(x),\quad \widetilde\Phi_l(x) \equiv -i \partial_t^{l-1} f(0,x) + \bigl( ia \partial_x^2 -b\partial_x - \partial_x^3\bigr) \widetilde\Phi_{l-1}(x), \ l\geq 1.
\end{equation}
Then there exists a unique solution to problem \eqref{2.1}, \eqref{2.2} $u\in C^\infty([0,T];\EuScript S_{exp,+})$.
\end{lemma}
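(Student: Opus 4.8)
The plan is to solve the linear problem \eqref{2.4}--\eqref{2.5} via a Laplace transform in $t$, or — what is technically cleaner — to pass to a problem on the whole line after an appropriate extension, and then return to the half-line using the decay of the data. First I would extend $u_0$ and $f(t,\cdot)$ from $\overline{\mathbb R}_+$ to $\mathbb R$ preserving exponential decay and smoothness (a Seeley-type extension composed with multiplication by a cut-off), obtaining a Cauchy problem $i v_t + a v_{xx} + i b v_x + i v_{xxx} = \widetilde f$, $v(0,\cdot) = \widetilde u_0$ on $\mathbb R$; the operator $\partial_t + \partial_x^3$ (together with the lower order terms, which only shift the dispersion relation) generates a group on Schwartz-type spaces, so the Cauchy solution $v$ exists, is $C^\infty([0,T];\mathcal S)$, but in general does not satisfy the boundary condition at $x=0$. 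The difference $w = u - v$ must then solve the homogeneous equation \eqref{2.4} with zero initial data and boundary value $\mu(t) - v(t,0) =: \nu(t)$, where $\nu \in C^\infty[0,T]$ and, crucially, the compatibility conditions \eqref{2.6}--\eqref{2.7} translate into $\nu^{(l)}(0) = 0$ for all $l$, because $v^{(l)}(t,0)\big|_{t=0}$ reproduces exactly $\widetilde\Phi_l(0)$ by construction of the extension and the PDE.

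The core step is therefore the pure boundary-value problem: $i w_t + a w_{xx} + i b w_x + i w_{xxx} = 0$ in $\Pi_T^+$, $w(0,\cdot)=0$, $w(t,0)=\nu(t)$, with $\nu$ flat at $t=0$. Here I would apply the Laplace transform in $t$ (legitimate since $\nu$ is flat at $0$, so its extension by zero to $t<0$ is $C^\infty$, and one may also multiply by $e^{-\delta t}$ to control growth): writing $\widehat w(\tau,x) = \int_0^\infty e^{-\tau t} w(t,x)\,dt$ for $\Re\tau > 0$, the ODE in $x$ becomes $i w_{xxx} + a w_{xx} + i b w_x + i\tau \widehat w = 0$, a third-order linear ODE with characteristic equation $-i k^3 - a k^2 + i b k + i \tau = 0$. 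One checks that for $\Re\tau>0$ exactly two of the three roots $k_j(\tau)$ have negative real part (this is the standard root-counting for $\partial_t + \partial_x^3$ on a half-line, and the lower order terms perturb it without changing the count for $|\tau|$ large, with the small-$\tau$ regime handled separately); the decaying solution is then $\widehat w(\tau,x) = \widehat\nu(\tau)\, e^{k(\tau) x}$ for the appropriate decaying branch (or a combination of the two decaying exponentials when there are two, determined by requiring $\widehat w$ and its relevant derivative to be controlled — but in fact only one boundary datum is prescribed, so one must verify that the space of decaying solutions satisfying the single Dirichlet condition is one-dimensional, which is where "exactly one decaying root" or a Wronskian argument is needed). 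Inverting the Laplace transform along a vertical contour $\Re\tau = \delta$ and using the analyticity and decay of $\widehat\nu(\tau)$ (from flatness of $\nu$) plus the uniform bounds $\Re k_j(\tau) \le -c|\tau|^{1/3}$ for large $|\tau|$, I obtain $w \in C^\infty([0,T]; \mathcal S_{exp,+})$, with all $x$-derivatives decaying exponentially because $e^{k(\tau)x}$ does so uniformly on the contour.

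Uniqueness is comparatively easy: if $u$ is any $C^\infty([0,T];\mathcal S_{exp,+})$ solution with zero data, then multiplying \eqref{2.4} by $\bar u$, integrating over $\mathbb R_+$, taking imaginary parts and using the boundary conditions $u(t,0)=0$ (and that $u$ together with its derivatives vanishes at $+\infty$) gives an energy identity of the form $\frac{d}{dt}\|u(t)\|_{L_{2,+}}^2 = (\text{boundary terms at }x=0) = 0$, whence $u\equiv 0$; the possible boundary contribution $|u_x(t,0)|^2$ that arises from $\int i u_{xxx}\bar u$ comes with a definite sign, so it can only help.

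The step I expect to be the main obstacle is the root-counting and the solvability of the $x$-ODE with a single Dirichlet condition: one must show that for $\Re\tau$ on the inversion contour the decaying solution space is exactly one-dimensional so that $\widehat\nu(\tau)$ determines $\widehat w$ uniquely, and simultaneously obtain the uniform-in-$\tau$ exponential decay estimates on $e^{k(\tau)x}$ and on $\widehat\nu(\tau)$ needed to justify differentiation under the inversion integral and to land in $\mathcal S_{exp,+}$ rather than merely in $\mathcal S_+$. A clean alternative that sidesteps some of this is to cite the known solvability theory for the linearized KdV operator $\partial_t+\partial_x^3$ on the half-line (e.g. from \cite{F12}) and treat $a u_{xx} + i b u_x$ as admissible lower-order perturbations via a fixed-point or continuity argument in $T$; I would mention this as the route actually taken if the direct Laplace computation proves too heavy.
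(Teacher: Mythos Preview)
Your overall architecture---solve the full-line Cauchy problem first, then correct the boundary trace with a homogeneous-equation solution---matches the paper, but there are two genuine gaps.

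First, the root count is wrong. For $u_t + u_{xxx} + (\text{lower order}) = 0$ on $x>0$, the characteristic equation $r^3 + \tau + \dots = 0$ has exactly \emph{one} root with $\Re r<0$ when $\Re\tau>0$ (for real $\tau>0$ the cube roots of $-\tau$ are $-\tau^{1/3}$ and $\tau^{1/3}e^{\pm i\pi/3}$), not two; this is precisely why a single Dirichlet condition at $x=0$ is the correct count. Your own parenthetical ``exactly one decaying root'' is the right statement and contradicts the sentence before it. The paper records this one-root fact at \eqref{2.37}. Second, and more seriously, you never argue that the Cauchy solution $v$, restricted to $\overline{\mathbb R}_+$, lies in $C^\infty([0,T];\EuScript S_{exp,+})$ rather than merely $C^\infty([0,T];\EuScript S_+)$. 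The group generated by $\partial_x^3$ preserves Schwartz decay but does not automatically preserve exponential decay; extending the data ``preserving exponential decay'' does not by itself propagate it. The paper supplies this by a weighted energy estimate on $\mathbb R_+$ with $\psi(x)=x^m$, then sums over $m$ against $(2\alpha)^m/m!$ to synthesize the weight $e^{2\alpha x}$ (the passage \eqref{2.10}--\eqref{2.11}). Some such step is unavoidable, and it is needed again at the very end to upgrade the assembled $u$ from $\EuScript S_+$ to $\EuScript S_{exp,+}$.

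For the boundary correction itself the paper takes a different route from your Laplace transform: it lifts the datum by $\Psi(t,x)=\mu_0(t)\eta(1-x)$, solves the resulting zero-boundary problem \eqref{2.13} by a Galerkin approximation with energy estimates \eqref{2.17}--\eqref{2.21} on all $t$-derivatives, and then bootstraps spatial regularity from the relation $V_{xxx} = iaV_{xx} - V_t - bV_x - iF$. Your Laplace route is close in spirit to the paper's later construction of the boundary potential $J^+$ in \eqref{2.39} and is workable in principle, but the phrase ``the small-$\tau$ regime handled separately'' hides exactly the place where the uniform bound $\Re k(\tau)\le -\varepsilon$ can fail, so landing $w$ directly in $\EuScript S_{exp,+}$ from the inversion integral is not free; the paper's Galerkin-plus-bootstrap avoids this issue entirely at the cost of a longer but more elementary argument. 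Your uniqueness sketch via the $L_2$ energy identity with the favorable sign of $|u_x(t,0)|^2$ is correct.
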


\begin{proof}
Extend the functions $u_0$ and $f$ to the the whole real line such that $u_0 \in \EuScript S$, $f\in C^\infty([0,T]; \EuScript S)$ and consider the initial value problem
\begin{equation}\label{2.8}
i w_t +a w_{xx} +i b w_x+i w_{xxx}  = f(t,x), \quad w(0,x) = u_0(x).
\end{equation}
The unique solution of this problem $w(t,x)$ in the space $C^\infty([0,T]; \EuScript S)$ is constructed via the Fourier transform and is given by a formula
\begin{equation}\label{2.9}
\widehat w(t,\xi) = \widehat u_0(\xi) e^{i(\xi^3 -a \xi^2 -b\xi)t} -i
\int_0^t \widehat f(\tau,\xi) e^{i(\xi^3 -a \xi^2 -b\xi)(t-\tau)}\,d\tau,
\end{equation}
where
$$
\widehat w(t,\xi) \equiv \int_{\mathbb R} e^{-i x\xi} w(t,x)\,dx
$$
with the similar notation for $\widehat u_0$ and $\widehat f$.

Next, let $w_j(t,x) \equiv \partial_x^j w(t,x)$ for some $j$. Then the function $w_j$ is the solution of the problem of \eqref{2.8} type, where $u_0$, $f$ are replaced by $u_0^{(j)}$, $\partial_x^j f$. Let $m\geq 3$, $\psi (x)\equiv x^m$ (note that this function is not an admissible weight one). Multiplying the corresponding equation for $w_j$ by $2\bar w_j(t,x)\psi(x)$, integrating over $\mathbb R_+$ and taking the imaginary part, we derive an equality (here we use that $\psi^{(l)}(0) =0$ for $l\leq 2$)
\begin{multline}\label{2.10}
\frac{d}{dt}\int |w_j|^2\psi\, dx +3 \int |w_{jx}|^2\psi' \,dx = 2a \Im \int w_{jx} \bar w_j \psi'\, dx \\ +
 \int |w_j|^2 \psi'''\, dx + b \int |w_j|^2 \psi' \, dx + 2\Im \int \partial_x^j f \bar w_j \psi\, dx. 
\end{multline} 
Since
$$
\Bigl| 2a \Im \int w_{jx} \bar w_j \psi'\, dx \Bigr| \leq \int |w_{jx}|^2\psi' \,dx + a^2 \int |w_j|^2 \psi' \, dx,
$$
it follows from \eqref{2.10} that
\begin{equation}\label{2.11}
\frac{d}{dt}\int |w_j|^2\psi\, dx \leq \int |w_j|^2 \bigl( (a^2 +|b|) \psi' + \psi''' \bigr)\, dx + 2\Im \int \partial_x^j f \bar w_j \psi\, dx.
\end{equation}
Fix $\alpha>0$ and let $n\geq 3$. For any $m\in [3,n]$ multiplying the corresponding inequality \eqref{2.11} by $(2\alpha)^m/(m!)$ and summing by $m$ we obtain that for
$$
z_n(t) \equiv \int \sum\limits_{m=0}^n \frac{(2\alpha x)^m}{m!} |w_j(t,x)|^2 \,dx
$$
because of the special choice of the functions $\psi$ inequalities
$$
z'_n(t) \leq c z_n(t) +c, \quad z_n(0) \leq c,
$$
hold uniformly with respect to $n$, whence it follows that 
$$
\sup\limits_{t\in [0,T]} \int e^{2\alpha x} |\partial_x^j w(t,x)|^2 \, dx <\infty.
$$
As a result, $w\in C^\infty([0,T]; \EuScript S_{exp,+})$.

Next, let $\mu_0(t) \equiv \mu(t) - w(t,0)$. Note that $\mu_0 \in C^\infty[0,T]$ and $\mu_0^{(l)}(0) =0$ $\forall l$ because of the compatibility conditions \eqref{2.6}, \eqref{2.7}. Consider in $\Pi_T^+$ an initial-boundary value problem
\begin{equation}\label{2.12}
i v_t +a v_{xx} +i b v_x+i v_{xxx}  = 0,\quad v(0,x) =0,\quad v(t,0) = \mu_0(t).
\end{equation}
Let $\Psi(t,x) \equiv \mu_0(t) \eta(1-x)$, $F(t,x) \equiv - (i\Psi_t + a \Psi_{xx} +ib\Psi_x + i\Psi_{xxx})(t,x)$. It is obvious that $F\in C^\infty([0,T]: \EuScript S_{exp,+})$ and $\partial_t^l F(0,x) \equiv 0$ $\forall l$. Problem \eqref{2.12} is equivalent to a problem
\begin{equation}\label{2.13}
i V_t +a V_{xx} +i b V_x +i V_{xxx}  = F(t,x),\quad V(0,x) =0,\quad V(t,0) = 0
\end{equation}
for the function $V(t,x) \equiv v(t,x) - \Psi(t,x)$.

Apply the Galerkin method. Let $\{\varphi_j(x): j=1,2\dots\}$ be a set of linearly independent functions complete in the space $\{\varphi \in H^3_+: \varphi(0) = \varphi'(0) =0\}$. Construct an approximate solution of problem \eqref{2.13} in the form
\begin{equation}\label{2.14}
V_k(t,x) = \sum\limits_{j=1}^k c_{kj}(t) \varphi_j(x)
\end{equation}
via conditions
\begin{gather}\label{2.15}
\int \bigl[ i V_{kt}\bar\varphi_m  +  V_k (a \bar\varphi''_{m} -ib \bar\varphi'_{m} - i\bar\varphi'''_{m}) -F\bar\varphi_m \bigr]\, dx =0, \quad m= 1,\dots,k,\ t\in [0,T]; \\
\label{2.16}
c_{kj}(0) =0.
\end{gather}
Multiplying \eqref{2.15} by $2\bar c_{km}(t)$, summing with respect to $m$ and taking the imaginary part, we find that
\begin{equation}\label{2.17}
\frac{d}{dt} \int |V_k|^2\, dx = 2\Im \int F \bar V_k \,dx,
\end{equation}
whence it follows that
\begin{equation}\label{2.18}
\|V_k\|_{L_\infty(0,T; L_{2,+})} \leq \|F\|_{L_1(0,T;L_{2,+})}.
\end{equation}
Next, putting in \eqref{2.15} $t=0$, multiplying by $\bar c'_{km}(0)$, summing with respect to $m$ and taking the imaginary part, we find that
\begin{equation}\label{2.19}
\int \Bigl| V_{kt}\big|_{t=0} \Bigr|^2\, dx =0,
\end{equation}
that is $V_{kt}(0,x) \equiv 0$. Then differentiating \eqref{2.15} with respect to $t$, multiplying by $\bar c'_{km}(t)$, summing with respect to $m$ and taking the imaginary part, we find similarly to \eqref{2.18} that 
\begin{equation}\label{2.20}
\|V_{kt}\|_{L_\infty(0,T; L_{2,+})} \leq \|F_t\|_{L_1(0,T;L_{2,+})}.
\end{equation}
Repeating this argument we derive that for any $l$
\begin{equation}\label{2.21}
\|\partial^l_t V_k\|_{L_\infty(0,T; L_{2,+})} \leq \|\partial_t^l F\|_{L_1(0,T;L_{2,+})}.
\end{equation}
Estimate \eqref{2.21} provides existence of a weak solution $V(t,x)$ to problem \eqref{2.13} such that $\partial_t^l V \in C([0,T]; L_{2,+})$ $\forall l$  in the sense of the corresponding integral identity of \eqref{1.4} type. Note that the trace of the function $V|_{t=0} =0$. 
Since according to \eqref{2.13}
\begin{equation}\label{2.22}
V_{xxx} = i a V_{xx} -V_t -b V_x -i F,
\end{equation}
$\partial_t^l V_{xxx} \in C([0,T]; H_+^{-2})$ $\forall l$, that is $\partial_t^l V \in C([0,T];H^1_+)$. Application twice of \eqref{2.22} yields first that $\partial_t^l V_{xxx} \in C([0,T];H_+^{-1})$, that is  $\partial_t^l V\in C([0,T];H^2_+)$, and finally that $\partial_t^l V_{xxx} \in C([0,T];L_{2,+})$, that is $\partial_t^l V\in C([0,T]; H_+^3)$. In particular, the function $V$ satisfies equation \eqref{2.13} a.e. in $\Pi_T^+$ and its trace $V|_{x=0} =0$. For any natural $n$ differentiating equation \eqref{2.13} $3(n-1)$ times with respect to $x$ and using induction with respect to $n$, we derive that $\partial_t^l V \in C([0,T]; H^{3n}_+)$.  

As a result, the function $u \equiv w + V +\Psi$ is the solution of problem \eqref{2.4}, \eqref{2.5} from the space $C^\infty([0,T]; H_+^\infty)$.

Finally, let $\widetilde w(t,x) \equiv u(t,x) \eta(x-1)$. The function $\widetilde w$ solves an initial value problem of \eqref{2.8} type, where the functions $f$, $u_0$ are substituted by corresponding functions $\widetilde f$, $\widetilde u_0$ from the same classes. Then similarly to $w$ the function $\widetilde w \in C^\infty([0,T]; \EuScript S_{exp,+})$ and so
$u \in C^\infty([0,T]; \EuScript S_{exp,+})$.
\end{proof}

Next, consider weak solutions. The notion of a weak solution to the linear problem \eqref{2.4}, \eqref{2.5} is similar to Definition~\ref{D1.1} with obvious modification. In particular in the analog of integral identity  \eqref{1.4} it is sufficient to assume that $u\in L_2(\Pi_T^+)$.

\begin{lemma}\label{L2.3}
A weak solution of problem \eqref{2.4}, \eqref{2.5} is unique in the space $L_2(\Pi_T^+)$.
\end{lemma}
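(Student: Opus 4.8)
The plan is a Holmgren-type duality argument. By linearity it suffices to prove that a weak solution $u\in L_2(\Pi_T^+)$ of \eqref{2.4}, \eqref{2.5} with $u_0\equiv\mu\equiv f\equiv0$ vanishes identically; for such $u$ the analogue of identity \eqref{1.4} reduces to
\[
\iint_{\Pi_T^+} u\bigl(i\phi_t-a\phi_{xx}+ib\phi_x+i\phi_{xxx}\bigr)\,dx\,dt=0
\]
for every admissible test function $\phi$ (that is, $\phi\in C^\infty([0,T];\EuScript S_+)$, $\phi|_{t=T}=0$, $\phi|_{x=0}=\phi_x|_{x=0}=0$). Since $C_0^\infty(\Pi_T^+)$ is dense in $L_2(\Pi_T^+)$, it is enough to construct, for an arbitrary $g\in C_0^\infty(\Pi_T^+)$, an admissible test function $\phi$ solving the backward problem $i\phi_t-a\phi_{xx}+ib\phi_x+i\phi_{xxx}=g$; then $\iint_{\Pi_T^+}ug\,dx\,dt=0$ for a dense set of $g$, whence $u\equiv0$.

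To construct $\phi$ I would reverse time, $\tau=T-t$, $\chi(\tau,x)=\phi(T-\tau,x)$, which turns this backward problem into the forward-in-$\tau$ initial-boundary value problem
\[
i\chi_\tau+a\chi_{xx}-ib\chi_x-i\chi_{xxx}=-g(T-\tau,x),\qquad
\chi|_{\tau=0}=0,\quad \chi|_{x=0}=\chi_x|_{x=0}=0 .
\]
This adjoint problem carries two boundary conditions at $x=0$ (consistently with the original problem carrying one and $1+2$ being the order $3$ of the spatial part), and all compatibility conditions at $\tau=0$ hold because $g$ vanishes near $t=T$. I would solve it by exactly the Galerkin scheme used for problem \eqref{2.13} in the proof of Lemma~\ref{L2.2}, with a basis $\{\varphi_j\}$ complete in $\{\varphi\in H^3_+:\varphi(0)=\varphi'(0)=0\}$: both boundary conditions are then built into the approximating subspaces, and in the $L_2$ energy identity the boundary terms at $x=0$ coming from the second- and third-order terms all cancel, so one gets the exact analogue of \eqref{2.17}, hence $\|\chi_k\|_{L_\infty(0,T;L_{2,+})}\le\|g\|_{L_1(0,T;L_{2,+})}$; differentiating the Galerkin relations in $\tau$ and iterating as in \eqref{2.19}--\eqref{2.21} yields the same bounds for $\partial_\tau^l\chi_k$. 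Passing to the limit gives a weak solution $\chi$ with $\partial_\tau^l\chi\in C([0,T];L_{2,+})$ for all $l$; using the relation $\chi_{xxx}=\chi_\tau-ia\chi_{xx}-b\chi_x-ig(T-\tau,x)$ three times and then differentiating the equation in $x$ (cf.\ \eqref{2.22}) one bootstraps $\partial_\tau^l\chi\in C([0,T];H^{3n}_+)$ for all $l,n$, with traces $\chi|_{x=0}=\chi_x|_{x=0}=0$.

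The remaining point is the decay of $\chi$ as $x\to+\infty$ required for $\chi\in C^\infty([0,T];\EuScript S_+)$; here it is important that test functions need only Schwartz, not exponential, decay. For this I would localize: the function $\widetilde\chi:=\chi\,\eta(x-1)$, extended by zero to $x<1$, solves on all of $\mathbb R$ a pure initial value problem $i\widetilde\chi_\tau+a\widetilde\chi_{xx}-ib\widetilde\chi_x-i\widetilde\chi_{xxx}=G$, $\widetilde\chi|_{\tau=0}=0$, where $G$ equals $-g(T-\tau,x)\eta(x-1)$ plus commutator terms supported in $x\in[1,2]$, so $G\in C^\infty([0,T];\EuScript S)$. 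As in \eqref{2.9} the Fourier transform represents $\widetilde\chi$ by a Duhamel formula with a purely oscillatory (modulus-one) propagator, so $\widetilde\chi\in C^\infty([0,T];\EuScript S)$; since $\chi=\widetilde\chi$ for $x\ge2$ while $\chi$ is smooth on $[0,2]$, this gives $\chi\in C^\infty([0,T];\EuScript S_+)$. Finally $\phi(t,x):=\chi(T-t,x)$ is an admissible test function with $i\phi_t-a\phi_{xx}+ib\phi_x+i\phi_{xxx}=g$, and the proof is complete. The main thing to get right is that the class of admissible test functions is precisely adapted to the adjoint problem — in particular its two boundary conditions are exactly the ones encoded in the Galerkin subspace of Lemma~\ref{L2.2}, which is what makes the energy estimates close — together with the observation that only Schwartz decay is needed, supplied by the explicit Fourier representation after cutting away from $x=0$.
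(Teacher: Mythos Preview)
Your overall strategy --- Holmgren duality, time reversal to a forward adjoint problem, Galerkin construction, bootstrapping via the equation, and localization with $\eta(x-1)$ for Schwartz decay --- is exactly the paper's route (its one-line proof of Lemma~\ref{L2.3} defers to Lemma~\ref{L2.4}, which carries out precisely this program). The one substantive discrepancy is your choice of Galerkin space for the adjoint problem, and it creates a gap.

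You take the basis complete in $\{\varphi\in H^3_+:\varphi(0)=\varphi'(0)=0\}$, citing the scheme for problem \eqref{2.13}. But that scheme treats the \emph{original} direction ($+iu_{xxx}$), where only one boundary condition must be recovered; here you need two. With your two-condition test space, once $\chi$ is bootstrapped to $H^3$ and the weak identity is compared with the strong equation, the only surviving boundary relation is $i\chi(t,0)\,\overline{\varphi''(0)}=0$ (the terms carrying $\overline{\varphi(0)}$ and $\overline{\varphi'(0)}$ are killed by your test-space constraints). So you recover $\chi(t,0)=0$ but \emph{not} $\chi_x(t,0)=0$. The approximations do satisfy $\chi_{kx}(0)=0$, but you only have weak $L_2$ convergence, which does not transmit this trace to the limit; and with only one boundary condition the adjoint problem is underdetermined, so the limit need not be the solution with $\chi_x(t,0)=0$. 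Then $\phi$ is not an admissible test function and the duality step fails.

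The paper's Lemma~\ref{L2.4} instead takes the basis complete in $\{\varphi\in H^3_+:\varphi(0)=0\}$. The energy identity becomes \eqref{2.27}, namely $\frac{d}{d\tau}\int|\chi_k|^2\,dx+|\chi_{kx}(0)|^2=2\Im\int G\bar\chi_k\,dx$: the extra boundary term has the favorable sign precisely because the third-order term has flipped, so the estimate still closes. And with $\varphi'(0)$ now free in the test space, the boundary relation after bootstrapping reads $(a\chi(0)-i\chi_x(0))\,\overline{\varphi'(0)}+i\chi(0)\,\overline{\varphi''(0)}=0$, forcing both $\chi(t,0)=0$ and $\chi_x(t,0)=0$. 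Switching to this one-condition basis repairs your argument; everything else you wrote goes through.
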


\begin{proof}
This lemma succeeds from the following result on existence of smooth solutions to the corresponding adjoint problem by the standard H\"olmgren argument.
\end{proof}

\begin{lemma}\label{L2.4}
Let $f\in C_0^\infty(\Pi_T^+)$, then there exists a solution $u\in C^\infty([0,T];\EuScript S_+)$ of an initial-boundary value problem in $\Pi_T^+$
\begin{equation}\label{2.23}
i u_t +a u_{xx} - i b u_x - i u_{xxx}  = f(t,x),\quad u(0,x) =0,\quad u(t,0) = u_x(t,0) =0.
\end{equation}
\end{lemma}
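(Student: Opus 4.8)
The equation in \eqref{2.23} is of ``backward KdV'' type: dividing by $i$ its principal part is $u_t - u_{xxx}$, so on $\mathbb R_+$ it requires \emph{two} boundary conditions at $x=0$. Indeed, multiplying \eqref{2.23} by $2\bar u$ and integrating over $\mathbb R_+$ produces, besides the terms cancelled by $u(t,0)=0$, a boundary contribution proportional to $|u_x(t,0)|^2$ with the sign \emph{opposite} to the dissipative one occurring for the forward equation \eqref{2.4}; this is precisely why the second condition $u_x(t,0)=0$ is imposed. Since $f\in C_0^\infty(\Pi_T^+)$ is supported in some $(0,T)\times[\delta,R]$, all initial data and compatibility conditions vanish trivially, i.e.\ $\partial_t^l f(0,\cdot)\equiv 0$ for all $l$. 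The plan is: (i) solve the analogous, now well-posed, problem on a large bounded interval $(0,L)$; (ii) obtain estimates uniform in $L$, let $L\to\infty$, and bootstrap via the equation to a solution in $C^\infty([0,T];H^\infty_+)$ with both boundary conditions; (iii) upgrade the decay at $+\infty$ by localization, as in the last step of the proof of Lemma~\ref{L2.2}.

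For (i), fix $L>R$ and consider in $(0,T)\times(0,L)$ the equation \eqref{2.23} with $u^L(0,\cdot)=0$ and the three homogeneous boundary conditions $u^L(t,0)=u^L_x(t,0)=u^L(t,L)=0$ --- the correct and dissipative set, since the analogous $L_2$-in-$x$ identity now reads $\frac{d}{dt}\int_0^L|u^L|^2\,dx+|u^L_x(t,L)|^2=2\Im\int_0^L f\bar u^L\,dx$, with the artificial-boundary term carrying the favorable sign, whence $\|u^L(t)\|_{L_2(0,L)}\le\int_0^t\|f(\tau)\|_{L_{2,+}}\,d\tau$ uniformly in $L$. A smooth solution $u^L$ satisfying all three boundary conditions exists by the standard theory for this dissipative problem on a bounded domain (Galerkin with a suitable basis, or semigroup methods). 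The crucial point is that, because $f$ is supported away from both endpoints, the same bounded-domain regularity estimates apply to the $x$-derivatives of $u^L$ away from the endpoints with no boundary-compatibility corrections, so that for every fixed $L'<L$ and all $l,n$ one gets $\|\partial_t^l u^L\|_{C([0,T];H^{3n}(0,L'))}\le C(l,n,L')$ and $\|\partial_t^l u^L\|_{C([0,T];L_2(0,L))}\le\int_0^T\|\partial_t^l f\|_{L_{2,+}}\,d\tau$, all uniformly in $L$.

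For (ii), letting $L\to\infty$ along a subsequence, the uniform local-in-$x$ bounds together with the $t$-derivative bounds give strong convergence $u^L\to u$ in $C([0,T];H^{3n-1}(0,L'))$ for every $L',n$; hence $u$ satisfies \eqref{2.23} a.e.\ in $\Pi_T^+$, $u(0,\cdot)=0$, and --- since the traces pass to the limit --- $u(t,0)=u_x(t,0)=0$. The uniform $L_2$-bounds give $\partial_t^l u\in L_\infty(0,T;L_{2,+})$ for all $l$, whence, $\partial_t^{l+1}u$ lying in the same space, $\partial_t^l u\in C([0,T];L_{2,+})$; rewriting \eqref{2.23} as $u_{xxx}=u_t-iau_{xx}-bu_x+if$ and bootstrapping exactly as after \eqref{2.22} yields $\partial_t^l u\in C([0,T];H^{3n}_+)$ for every $n$, i.e.\ $u\in C^\infty([0,T];H^\infty_+)$. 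Finally, for (iii): weighted-in-$x$ energy estimates are unavailable here (for the $-iu_{xxx}$ term the term $\int|u_x|^2\psi'$ in the weighted identity has the wrong sign), so one localizes --- $\widetilde w\equiv u\,\eta(x-1)$ solves a Cauchy problem $i\widetilde w_t+a\widetilde w_{xx}-ib\widetilde w_x-i\widetilde w_{xxx}=\widetilde f$ on $\mathbb R$ with $\widetilde w|_{t=0}=0$, where $\widetilde f=f\,\eta(x-1)$ plus commutator terms supported in $[1,2]$, so $\widetilde f\in C^\infty([0,T];\EuScript S)$ with $\partial_t^l\widetilde f(0,\cdot)\equiv 0$; solving via the Fourier transform as in \eqref{2.9} gives $\widetilde w\in C^\infty([0,T];\EuScript S)$, and since $u\,\eta(2-x)$ has compact support and $u=u\,\eta(x-1)+u\,\eta(2-x)$, we conclude $u\in C^\infty([0,T];\EuScript S_+)$.

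The main obstacle --- the point where the argument genuinely departs from the proof of Lemma~\ref{L2.2} --- is securing the \emph{second} boundary condition $u_x(t,0)=0$: a direct half-line Galerkin scheme with a basis satisfying $\varphi(0)=\varphi'(0)=0$ produces approximations with $u_k(t,0)=u_{kx}(t,0)=0$ but only $L_{2,+}$-bounds, so the weak-$*$ limit a priori retains just $u(t,0)=0$. It is the detour through the bounded-interval problem --- where the compact support of $f$ away from the artificial endpoint furnishes $L$-uniform local high-Sobolev bounds, hence strong local convergence of $u^L$ and its $x$-derivatives --- that forces $u_x(t,0)=0$ to hold in the limit.
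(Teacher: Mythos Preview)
Your detour through bounded intervals is unnecessary, and the step you flag as the ``main obstacle'' rests on a misdiagnosis. The paper proceeds on the half-line directly, via Galerkin with a basis complete in $\{\varphi\in H^3_+:\varphi(0)=0\}$ --- only \emph{one} essential condition, not two. With that choice the energy identity reads
\[
\frac{d}{dt}\int |V_k|^2\,dx + \bigl|V_{kx}(t,0)\bigr|^2 = 2\Im\int F\bar V_k\,dx,
\]
so the trace $V_{kx}(t,0)$ appears with the \emph{favorable} sign and the $L_2$ estimate closes. The point you miss is that $V_x(t,0)=0$ is then a \emph{natural} boundary condition: once the limit $V$ is bootstrapped to $C([0,T];H^3_+)$ via $V_{xxx}=iF+V_t-iaV_{xx}-bV_x$, integrating the weak identity (test functions $\phi$ with only $\phi(0)=0$) back by parts produces the boundary residual $-iV(t,0)\bar\phi_{xx}(0)+\bigl(iV_x(t,0)-aV(t,0)\bigr)\bar\phi_x(0)$, and since $\phi_x(0),\phi_{xx}(0)$ vary freely this forces $V(t,0)=V_x(t,0)=0$. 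Your remark that a basis with $\varphi(0)=\varphi'(0)=0$ yields only $u(0)=0$ in the limit is correct, but that is an argument for enlarging the test-function space, not for abandoning the half-line.

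Your bounded-interval route also has a genuine gap: the uniform-in-$L$ bounds $\|\partial_t^l u^L\|_{C([0,T];H^{3n}(0,L'))}\le C(l,n,L')$ are asserted but not proved, and they are not immediate. The bootstrap $u^L_{xxx}=u^L_t-iau^L_{xx}-bu^L_x+if$ requires uniform control of $u^L_x,u^L_{xx}$ in $L_2(0,L)$, which standard interpolation/interior-regularity arguments do not give without constants depending on $L$ or on the uncontrolled traces at $x=L$; your phrase ``away from the endpoints'' conflicts with the fact that you need the bounds up to $x=0$ to pass the trace $u_x(t,0)$ to the limit. Ironically, the clean way to close your own argument is the same weak-formulation observation: since each $u^L$ satisfies $u^L(0)=u^L_x(0)=0$, integration by parts against any $\phi$ with $\phi(0)=0$ supported in $[0,L)$ leaves no boundary term, the weak identity passes to the $L_2$-weak limit using only the uniform $\|\partial_t^l u^L\|_{L_2}$ bounds, and the bootstrap-plus-natural-boundary-condition argument above then delivers both $u(0)=0$ and $u_x(0)=0$ --- no strong local convergence needed.
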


\begin{proof}
The scheme of the proof in many ways repeat the proof of Lemma~\ref{L2.2}. First of all, extend the functions $u_0$ and $f$ to the whole real line in a proper way, consider the initial value problem for equation \eqref{2.23} and construct its solution $w \in C^\infty([0,T]; \EuScript S)$ with obvious changes in \eqref{2.9}.

Next, let $\mu_0(t) \equiv - w(t,0)$, $\mu_1(t) \equiv -w_x(t,0)$. Then $\mu_j\in C^\infty[0,T]$ and $\mu_j^{(l)}(0) =0$ $\forall l$. Consider in $\Pi_T^+$ an initial-boundary value problem
\begin{equation}\label{2.24}
i v_t +a v_{xx} -i b v_x-i v_{xxx}  = 0,\quad v(0,x) =0,\ v(t,0) = \mu_0(t), \ v_x(t,0) = \mu_1(t).
\end{equation}
Let $\Psi(t,x) \equiv \mu_0(t) \eta(1-x) + \mu_1(t) x \eta(1-x)$, $F(t,x) \equiv - i\Psi_t - a \Psi_{xx} +ib\Psi_x + i\Psi_{xxx})(t,x)$. It is obvious that $F\in C^\infty([0,T]: \EuScript S_{+})$ and $\partial_t^l F(0,x) \equiv 0$ $\forall l$. Problem \eqref{2.24} is equivalent to a problem
\begin{equation}\label{2.25}
i V_t +a V_{xx} -i b V_x -i V_{xxx}  = F(t,x),\quad V(0,x) =0,\  V(t,0) = V_x(t,0) =0
\end{equation}
for the function $V(t,x) \equiv v(t,x) - \Psi(t,x)$.

Apply the Galerkin method, but here let $\{\varphi_j(x): j=1,2\dots\}$ be a set of linearly independent functions complete in the space $\{\varphi \in H^3_+: \varphi(0) = 0\}$. Construct an approximate solution of problem \eqref{2.25} in the form \eqref{2.14} via conditions
\begin{equation}\label{2.26}
\int \bigl[ i V_{kt}\bar\varphi_m  +  V_k (a \bar\varphi''_{m} +ib \bar\varphi'_{m} + i\bar\varphi'''_{m}) -F\bar\varphi_m \bigr]\, dx =0, \quad m= 1,\dots,k,\ t\in [0,T],
\end{equation}
and \eqref{2.16}.
Multiplying \eqref{2.26} by $2\bar c_{km}(t)$, summing with respect to $m$ and taking the imaginary part, we find that
\begin{equation}\label{2.27}
\frac{d}{dt} \int |V_k|^2\, dx  + \Bigl| V_{kx}\big|_{x=0} \Bigr|^2= 2\Im \int F \bar V_k \,dx,
\end{equation}
whence estimate \eqref{2.18} follows. Next, similarly to \eqref{2.19}, \eqref{2.20} we derive for any $l$ estimate \eqref{2.21}.

Estimate \eqref{2.21} provides existence of a weak solution $V(t,x)$ to problem \eqref{2.25} such that $\partial_t^l V \in C([0,T]; L_{2,+})$ $\forall l$  in the sence of the corresponding integral identity of \eqref{1.4} type and the trace of the function $V|_{t=0} =0$. 
With the use of an equality
$$
V_{xxx} = i F + V_t - i a V_{xx} -b V_x 
$$
Applying the same argument as in the proof of Lemma~\ref{L2.2}, we find that the function $V$ satisfies equation \eqref{2.25} a.e. in $\Pi_T^+$, its traces $V|_{x=0} =V_x|_{x=0} =0$ and $\partial_t^l V \in C([0,T]; H^{3n}_+)$ $\forall l, n$. Therefore, the function $u \equiv w + V +\Psi$ is the solution of problem \eqref{2.23} from the space $C^\infty([0,T]; H_+^\infty)$. Finally, introducing the function $\widetilde w(t,x) \equiv u(t,x) \eta(x-1)$, we find that similarly to $w$ the function $\widetilde w \in C^\infty([0,T]; \EuScript S_{+})$ and so $u \in C^\infty([0,T]; \EuScript S_{+})$.
\end{proof}

\begin{lemma}\label{L2.5}
Let $\psi(x)$ be an admissible weight function, such that $\psi'(x)$ is also an admissible weight function, $u_0\in L_{2,+}^{\psi(x)}$, $\mu\equiv 0$, $f\equiv f_0 +f_{1x}$, where   $f_0\in L_1(0,T;L_{2,+}^{\psi(x)})$, $f_1\in L_{2}(0,T;L_{2,+}^{\psi^{2}(x)/\psi'(x)})$. Then there exist a (unique) weak solution to problem \eqref{2.4}, \eqref{2.5} from the space $X^{\psi(x)}(\Pi_T^+)$ and a function $\mu_1\in L_2(0,T)$, such that for any function $\phi \in C^\infty([0,T]; \EuScript S_+)$, $\phi\big|_{t=T} =0$, $\phi\big|_{x=0} =0$, the following equality holds:
\begin{multline}\label{2.28}
\iint_{\Pi_T^+}\Bigl[u(i \phi_t -a \phi_{xx} +i b \phi_x + i\phi_{xxx})+ f_0 \phi -f_1\phi_x\Bigr]\,dxdt \\
+\int u_0\phi\big|_{t=0}\,dx - i \int_{0}^T \mu_1\phi_{x}\big|_{x=0}\,dt =0.
\end{multline}
Moreover, for $t\in (0,T]$
\begin{multline}\label{2.29}
\|u\|_{X^{\psi(x)}(\Pi_t^+)} +\|\mu_1\|_{L_2(0,t)} \leq c(T)\Bigl(\|u_0\|_{L_{2,+}^{\psi(x)}} +\|f_0\|_{L_1(0,t;L_{2,+}^{\psi(x)})}  \\ 
+\|f_1\|_{L_2(0,t;L_{2,+}^{\psi^2(x)/\psi'(x)})} \Bigr),
\end{multline}
and for a.e. $t\in (0,T)$
\begin{multline}\label{2.30}
\frac{d}{dt}\int |u(t,x)|^2 \psi(x)\,dx  + |\mu_1(t)|^2\psi(0)  + 3\int |u_{x}|^2 \psi'\,dx  = 2a \Im \int u_x \bar u \psi' \,dx \\ + b \int |u|^2 \psi' \,dx  +
\int |u|^2 \psi'''\, dx + 2 \Im \int f_0 \bar u\psi\,dx -2 \Im \int f_1 (\bar u\psi)_x\,dx.
\end{multline}
If $f_1\equiv 0$, then in equality \eqref{2.30} one can put $\psi\equiv 1$.
\end{lemma}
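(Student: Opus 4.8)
The plan is to construct the solution as a weak limit of the smooth solutions provided by Lemma~\ref{L2.2}, for which the weighted energy identity \eqref{2.30} can be obtained by a direct integration by parts, and then to pass to the limit using the bound \eqref{2.29} that this identity supplies. First I would approximate the data: choose $u_0^k\in\EuScript S_{exp,+}$ and $f_0^k,f_1^k\in C^\infty([0,T];\EuScript S_{exp,+})$, all vanishing in a neighbourhood of $x=0$, with $u_0^k\to u_0$ in $L_{2,+}^{\psi(x)}$, $f_0^k\to f_0$ in $L_1(0,T;L_{2,+}^{\psi(x)})$ and $f_1^k\to f_1$ in $L_2(0,T;L_{2,+}^{\psi^2(x)/\psi'(x)})$ (truncate away from $x=0$ and from $x=+\infty$, mollify in $x$, smooth in $t$), and set $f^k:=f_0^k+f_{1x}^k$. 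Since $\mu\equiv0$ and each function $\widetilde\Phi_l$ built from $u_0^k,f^k$ by \eqref{2.7} vanishes near $x=0$, the compatibility conditions \eqref{2.6} hold, so Lemma~\ref{L2.2} yields $u^k\in C^\infty([0,T];\EuScript S_{exp,+})$ solving \eqref{2.4}, \eqref{2.5} with these data; put $\mu_1^k(t):=-u^k_x(t,0)$.

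To derive the energy identity I would multiply the equation for $u^k$ by $2\overline{u^k}\psi(x)$, integrate over $\mathbb R_+$ and take the imaginary part. Since $u^k$ decays faster than any exponential, all boundary terms at $+\infty$ vanish; since $u^k(t,0)=0$, the only surviving boundary contribution at $x=0$ comes from the term $iu^k_{xxx}$ and equals $|\mu_1^k(t)|^2\psi(0)$; the term $f^k_{1x}$ is integrated by parts once more into $-2\Im\int f_1^k(\overline{u^k}\psi)_x\,dx$. Collecting the lower order terms with the help of the bounds $|\psi'|,|\psi''|,|\psi'''|\le c\psi$ from \eqref{1.5} (available because $\psi$ and $\psi'$ are admissible), one gets precisely \eqref{2.30} for $u^k,\mu_1^k,f_0^k,f_1^k$. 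When $f_1\equiv0$ the same computation with the constant weight $\psi\equiv1$ involves no differentiated weight and yields the last assertion of the lemma.

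Next I would turn \eqref{2.30} for $u^k$ into an a priori estimate. Estimate $2a\Im\int u^k_x\overline{u^k}\psi'$ and the part $-2\Im\int f^k_1\overline{u^k_x}\psi$ of the last term by $\varepsilon\int|u^k_x|^2\psi'$ plus $c_\varepsilon\int|u^k|^2\psi$ and $c_\varepsilon\bigl\|f^k_1\psi/(\psi')^{1/2}\bigr\|_{L_{2,+}}^2$ (using $\psi'\le c\psi$); estimate the remaining $f^k_1$-term $-2\Im\int f^k_1\overline{u^k}\psi'$ by $c\bigl\|f^k_1\psi/(\psi')^{1/2}\bigr\|_{L_{2,+}}\bigl\|u^k\psi^{1/2}\bigr\|_{L_{2,+}}$ (using $(\psi')^3\psi^{-2}\le c\psi$); the terms with $\psi'''$, $\psi'$ and $f^k_0$ are handled in the obvious way. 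Choosing $\varepsilon$ small, absorbing the $u^k_x$-contributions into $3\int|u^k_x|^2\psi'$, integrating over $(0,t)$ and applying a Gronwall-type argument to $y_k(t):=\int|u^k(t,x)|^2\psi(x)\,dx$ (handling the $L_1$-in-time datum $f^k_0$ and the $L_2$-in-time datum $f^k_1$ separately) gives \eqref{2.29} for $u^k,\mu_1^k$, uniformly in $k$.

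Finally, from the uniform bound \eqref{2.29} I would extract a subsequence with $u^k\to u$ $\ast$-weakly in $L_\infty(0,T;L_{2,+}^{\psi(x)})$, $u^k_x\rightharpoonup u_x$ in $L_2(0,T;L_{2,+}^{\psi'(x)})$ and $\mu_1^k\rightharpoonup\mu_1$ in $L_2(0,T)$. Each $u^k$ satisfies \eqref{2.28} (obtained from its equation by multiplication by $\phi$ and integration by parts, using $\phi|_{t=T}=0$, $\phi|_{x=0}=0$ and $u^k(t,0)=0$, the boundary term arising from $iu^k_{xxx}$); since \eqref{2.28} is linear in $u$ and $\mu_1$ and $u_0^k\to u_0$, $f_0^k\to f_0$, $f_1^k\to f_1$ in the relevant spaces (note that $(\psi^2/\psi')^{-1/2}=(\psi')^{1/2}/\psi$ is bounded because $\psi'\le c\psi$ and $\psi\ge\psi(0)>0$, so the pairing $\iint f_1^k\phi_x$ passes to the limit), \eqref{2.28} holds in the limit; thus $u$ is a weak solution of \eqref{2.4}, \eqref{2.5}, unique in $L_2(\Pi_T^+)$ by Lemma~\ref{L2.3}, and \eqref{2.29} for $u$ follows by weak lower semicontinuity. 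To recover \eqref{2.30} for $u$ I would use an Aubin--Lions-type compactness argument --- $u^k$ is bounded in $L_2(0,T;H^1(0,r))$ for each $r$ and, via the equation, $u^k_t$ is bounded in $L_1(0,T;H^{-2}(0,r))$, so $u^k\to u$ in $L_2((0,T)\times(0,r))$ --- which allows the terms of \eqref{2.30} quadratic in $u$ to pass to the limit on bounded $x$-intervals, the tails being controlled uniformly by \eqref{2.29}; combined with lower semicontinuity for $|\mu_1|^2\psi(0)$ and $\int|u_x|^2\psi'$ this gives \eqref{2.30}, whence $t\mapsto\int|u(t,x)|^2\psi(x)\,dx$ is continuous and, together with weak continuity, $u\in X^{\psi(x)}(\Pi_T^+)$. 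I expect this last step --- establishing the energy identity for the limit, which is not guaranteed by weak convergence alone --- to be the main obstacle; everything else is routine because the problem is linear.
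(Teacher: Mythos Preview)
Your approach is essentially the same as the paper's: derive \eqref{2.30} and the integral identity \eqref{2.28} for the smooth solutions of Lemma~\ref{L2.2}, turn \eqref{2.30} into the bound \eqref{2.29} (the paper's inequality \eqref{2.32} is exactly your absorption argument), and close by approximation.

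The one place you make life harder than the paper is the final limit. Because the problem is \emph{linear}, estimate \eqref{2.29} applies equally well to differences $u^k-u^m$ and $\mu_1^k-\mu_1^m$; hence your approximating sequence is Cauchy in $X^{\psi(x)}(\Pi_T^+)\times L_2(0,T)$ and converges \emph{strongly}. With strong convergence every term in \eqref{2.30} and \eqref{2.28} passes to the limit trivially, and $u\in C([0,T];L_{2,+}^{\psi(x)})$ is immediate. This is what the paper means by ``closure on the basis of estimate \eqref{2.33}''. Your route via weak-$\ast$ extraction and Aubin--Lions would also work, but the obstacle you flag---recovering the energy identity for the limit---simply does not arise once you exploit linearity.
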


\begin{proof}
First assume that the functions $u_0$ and $f$ satisfy the hypothesis of Lemma~\ref{L2.2} and consider the corresponding smooth solution $u(t,x)$. Let $\psi$ be an admissible weight function. Multiply equation \eqref{2.4} by $2\bar u(t,x) \psi(x)$, integrate over $\mathbb R_+$ and take the imaginary part, then
\begin{multline}\label{2.31}
\int \Im 2iu_t \bar u \psi\,dx + a \int 2\Im u_{xx} \bar u\psi \, dx + b \int \Im 2i u_x \bar u\psi \, dx   + \int \Im 2i u_{xxx} \bar u\psi \, dx  \\= 2\Im \int f \bar u \psi\, dx. 
\end{multline}
Here
$$
\int \Im 2i u_t \bar u\psi \,dx = \int \Re 2 u_t \bar u\psi \, dx = \int (u_t \bar u + u \bar u_t)\psi \,dx = \frac{d}{dt} \int |u|^2\psi \,dx,
$$
$$
\int 2\Im u_{xx} \bar u\psi \,dx = -2\int \Im (|u_x|^2\psi +u_x \bar u \psi')\, dx = -2\Im \int u_x \bar u\psi'\, dx,
$$
$$
\int \Im 2i u_{x}\bar u\psi \,dx = \int 2\Re u_{x} \bar u \psi \, dx = \int (u_{x}\bar u + u \bar u_{x})\psi \, dx  = 
-\int |u|^2\psi' \,dx,
$$
\begin{multline*}
\int \Im 2i u_{xxx}\bar u\psi\,dx = \int 2\Re u_{xxx} \bar u\psi\, dx = \int (u_{xxx}\bar u + u \bar u_{xxx})\psi\, dx \\ = 
-\int (u_{xx} \bar u_x + u_x \bar u_{xx})\psi\, dx - \int (u_{xx}\bar u + u\bar u_{xx})\psi' \,dx \\ =
-\int (|u_x|^2)_x\psi \,dx + 2\int |u_x|^2 \psi'\, dx + \int (|u|^2)_x \psi''\, dx \\ = \bigl(|u_x|^2\psi\bigr)\big|_{x=0} +
3\int |u_x|^2 \psi'\, dx - \int |u|^2 \psi''' \,dx.
\end{multline*}
Therefore, equality \eqref{2.31} provides equality \eqref{2.30} for any $t\in[0,T]$, where $\mu_1 \equiv u_x\big|_{x=0}$. Moreover, in the smooth case equality \eqref{2.28} holds for $\mu_1 \equiv u_x\big|_{x=0}$.

Note, that if $\psi'$ is also an admissible weight function, then for an arbitrary $\varepsilon>0$
\begin{multline}\label{2.32}
\Bigl|\int f_1(\bar u\psi)_x\,dx\Bigr| \leq c \|(|u_{x}|+|u|)(\psi')^{1/2} \|_{L_{2,+}}\|f_1 \psi (\psi')^{-1/2}\|_{L_{2,+}} \\ \leq 
\varepsilon \int \bigl(|u_{x}|^2+|u|^2 \bigr)\psi'\,dx +
c(\varepsilon)\|f_1\|_{L_{2,+}^{\psi^{2}(x)/\psi'(x)}}^{2}.
\end{multline}
Equality \eqref{2.30} and inequality \eqref{2.32} imply that for smooth solutions
\begin{multline}\label{2.33}
\|u\|_{X^{\psi(x)}(\Pi_T^+)} + \|u_{x}\big|_{x=0}\|_{L_2(0,T)}  \\ \leq c(T)\Bigl(\|u_0\|_{L_{2,+}^{\psi(x)}} +\|f_0\|_{L_1(0,T;L_{2,+}^{\psi(x)})}  
+\|f_1\|_{L_2(0,T;L_{2,+}^{\psi^2(x)/\psi'(x)})} \Bigr).
\end{multline}

The end of the proof is performed in a standard way via closure on the basis of estimate \eqref{2.33}.
\end{proof}

Now consider special solutions of homogeneous equation \eqref{2.4} ($f\equiv 0$) or in an equivalent form
\begin{equation}\label{2.34}
u_t - i a u_{xx} + b u_x + u_{xxx} =0.
\end{equation}
These solutions of a ``boundary potential'' type were previously constructed and studied in \cite{F12} for more general equations, so here we present in brief only the main items (formally in that paper the coefficients of the equation were real, but the argument in the complex case is the same). 

The argument is based on the Laplace transform. Consider an ordinary equation
\begin{equation}\label{2.35}
y''' - i a y'' + b y' + i\lambda y =0,\quad \lambda \in \mathbb R \setminus\{0\}.
\end{equation}
Its characteristic algebraic equation is the following:
\begin{equation}\label{2.36}
r^3 - i a r^2 + b r + i \lambda =0.
\end{equation}
If $a=b =0$ there exists the unique root of this equation with the negative real part $r_0 = - (\sqrt3 + i\sgn\lambda) |\lambda|^{1/3}/2$. In the general case similar property holds for large values of $\lambda$, that is there exists $\lambda_0$ (without loss of generality one can assume that $\lambda_0\geq 1$), such that for $|\lambda|\geq \lambda_0$ equaion \eqref{2.36} has a root $r_0$, verifying for certain positive constants $\varepsilon$ and $c$ inequalities
\begin{equation}\label{2.37}
\Re r_0 \leq -2\varepsilon |\lambda|^{1/3},\quad |r_0| \leq c|\lambda|^{1/3}.
\end{equation}
With the use of $r_0$ the special solution on $\mathbb R_+$ of equation \eqref{2.35} $y^+(x;\lambda)$ is constructed such that
\begin{equation}\label{2.38}
\bigl| (y^+)^{(k)}(x;\lambda) \bigr| \leq c(k) \lambda^k e^{-\varepsilon \lambda x}\ \forall k.
\end{equation}  
Let $\mu \in L_2(\mathbb R)$ and $\widehat\mu(\lambda) =0$ for $|\lambda| < \lambda_0$. Define for $t\in\mathbb R$, $x>0$
\begin{equation}\label{2.39}
J^+(t,x;\mu) \equiv \mathcal F^{-1}_t \bigl[ y^+(x;\lambda) \widehat\mu(\lambda) \bigr](t).
\end{equation}
According to \cite{F12} the function $J^+ \in C^\infty(\mathbb R^t \times \mathbb R_+^x)$ and satisfies in this domain equation \eqref{2.34}. Moreover, there exists positive constant $\beta_0$ such that $\forall \beta\in [0,\beta_0)$, $\forall x_0>0$, $\forall k,j$
\begin{equation}\label{2.40}
\sup\limits_{x\geq x_0} e^{\beta x} \|\partial_x^k J^+\|_{H^j(\mathbb R^t)} \leq c(\beta, x_0, k, j) \|\mu\|_{L_2(\mathbb R)},
\end{equation}
\begin{equation}\label{2.41}
\|J^+\|_{C_b(\overline{\mathbb R}_+^x; H^{1/3}(\mathbb R^t))}, \|J^+_x\|_{C_b(\overline{\mathbb R}_+^x; L_2(\mathbb R^t))} \leq c \|\mu\|_{H^{1/3}(\mathbb R)}, \quad
\lim\limits_{x\to +0} J^+(t,x;\mu) = \mu(t),
\end{equation}
\begin{equation}\label{2.42}
\|J^+\|_{C_b(\mathbb R^t; L_{2,+})} \leq c \|\mu\|_{H^{1/3}(\mathbb R)},
\end{equation}
\begin{equation}\label{2.43}
\|J^+\|_{L_2(\mathbb R^t; H^1_+)} \leq c \|\mu\|_{H^{1/6}(\mathbb R)},
\end{equation}
\begin{equation}\label{2.44}
\|J^+\|_{L_2(\mathbb R^t; C_{b,+}^1)} \leq c(s) \|\mu\|_{H^s(\mathbb R)}, \quad s>1/3.
\end{equation}

The boundary potential $J^+$ is used in this paper as a tool to make zero the boundary data at $x=0$. Let $\chi(\lambda)$ be the characteristic function of the interval $(-\lambda_0, \lambda_0)$. For $\mu \in L_2(\mathbb R)$ let
\begin{equation}\label{2.45}
\mu_0(t) \equiv \mathcal F_t^{-1} \bigl[ \widehat\mu(\lambda) \chi(\lambda)\bigr](t),\quad \mu_1(t) \equiv \mu(t) - \mu_0(t).
\end{equation}
For $x>0$ define
\begin{equation}\label{2.46}
\Psi_0(t) \!\equiv \bigl[ \mu_0(t) + J^+(t,x; \mu_1)\bigr] \eta(2-x),  F_0(t,x)\! \equiv (i \Psi_{0t} + a\Psi_{0xx} + i b \Psi_{0x} + 
i \Psi_{0xxx})(t,x).
\end{equation}
Note that $\Psi_0(t,x) = F_0(t,x) =0$ for $x\geq 2$.
\begin{lemma}\label{L2.6}
Let $\mu \in H^{1/3}(0,T$), then $F_0 \in H^\infty(\Pi_T^+)$ and
\begin{gather}\label{2.47}
\Psi_0 \in C([0,T];L_{2,+}) \cap C_b(\overline{\mathbb R}_+;H^{1/3}(0,T)),\quad \lim\limits_{x\to +0} \Psi_0(t,x) = \mu(t), \\
\label{2.48}
\Psi_{0x} \in  C_b(\overline{\mathbb R}_+;L_2(0,T)),
\end{gather}
and if $\mu\in H^s(0,T)$ for $s>1/3$, then in addition
\begin{equation}\label{2.49}
\Psi_0 \in L_2(0,T; C_{b,+}^1).
\end{equation}
\end{lemma}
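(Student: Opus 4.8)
The plan is to reduce everything to the already-established properties \eqref{2.40}--\eqref{2.44} of the boundary potential $J^+$ and to exploit one crucial cancellation. First I would fix once and for all a bounded linear extension operator $H^s(0,T)\to H^s(\mathbb R)$ and replace $\mu$ by such an extension, still denoted $\mu$, so that $\mu\in H^{1/3}(\mathbb R)$ with $\|\mu\|_{H^{1/3}(\mathbb R)}\leq c\|\mu\|_{H^{1/3}(0,T)}$ (and $\|\mu\|_{H^s(\mathbb R)}\leq c\|\mu\|_{H^s(0,T)}$ in the second case); this makes $\widehat\mu$, hence $\mu_0,\mu_1$ from \eqref{2.45}, well-defined. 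Since $\widehat\mu_0=\widehat\mu\chi$ is supported in $[-\lambda_0,\lambda_0]$ and lies in $L_2(\mathbb R)$, hence in $L_1(\mathbb R)$, a Paley--Wiener/Bernstein argument gives $\mu_0\in C_b^\infty(\mathbb R)$ with $\|\mu_0^{(k)}\|_{C_b(\mathbb R)}\leq c(k,\lambda_0)\|\mu\|_{H^{1/3}(0,T)}$ for every $k$. On the other hand $\widehat\mu_1=\widehat\mu(1-\chi)$ vanishes on $(-\lambda_0,\lambda_0)$ and $\|\mu_1\|_{H^s(\mathbb R)}\leq\|\mu\|_{H^s(\mathbb R)}$, so $J^+(t,x;\mu_1)$ is defined and \eqref{2.40}--\eqref{2.44} hold with $\mu_1$ in place of $\mu$.

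The heart of the matter is the claim $F_0\in H^\infty(\Pi_T^+)$, and it rests on the fact that $J^+$ solves the homogeneous equation. Write $\mathcal L\equiv i\partial_t+a\partial_x^2+ib\partial_x+i\partial_x^3=i\bigl(\partial_t-ia\partial_x^2+b\partial_x+\partial_x^3\bigr)$, so that $F_0=\mathcal L\Psi_0$ and $\mathcal L J^+=0$ in $\mathbb R^t\times\mathbb R_+^x$ by \eqref{2.34}. For $0<x<1$ one has $\eta(2-x)\equiv1$, hence $\Psi_0=\mu_0+J^+$ and therefore $F_0=\mathcal L\mu_0+\mathcal L J^+=i\mu_0'(t)$; for $x\geq2$, $\Psi_0\equiv0$, so $F_0\equiv0$; for $1\leq x\leq2$, the variable $x$ is bounded away from $0$, so by \eqref{2.40} (with $x_0=1$) $J^+$ and all its $t$- and $x$-derivatives are bounded, $\mu_0\in C_b^\infty$, and $\eta(2-x)$ is smooth, whence $F_0$ is $C^\infty$ with all derivatives bounded there. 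Patching the three regions (the three expressions agree together with all derivatives across $x=1$ and $x=2$, since $\Psi_0\in C^\infty(\mathbb R^t\times(0,\infty))$ and $\eta$ vanishes to all orders at $0$), one gets $F_0\in C^\infty(\mathbb R^t\times\overline{\mathbb R}_+)$, supported in $\{x\leq2\}$ and bounded with all derivatives on $(0,T)\times\overline{\mathbb R}_+$, hence $F_0\in H^\infty(\Pi_T^+)$.

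The regularity and trace of $\Psi_0$ then follow by multiplying the bounds for $\mu_0$ and $J^+$ by the bounded, compactly supported factors $\eta(2-x)$ and $\eta'(2-x)$. Since $\mu_0\in C(\mathbb R^t)$ and $\eta(2-\cdot)\in L_{2,+}$, the term $\mu_0(t)\eta(2-x)$ lies in $C(\mathbb R^t;L_{2,+})$, and $J^+\eta(2-x)\in C(\mathbb R^t;L_{2,+})$ by \eqref{2.42}; hence $\Psi_0\in C([0,T];L_{2,+})$. For fixed $x$, $\mu_0(t)\eta(2-x)$ is a scalar multiple of the restriction of $\mu_0\in C_b^\infty(\mathbb R)$ depending continuously and boundedly on $x$, while $J^+\eta(2-x)\in C_b(\overline{\mathbb R}_+^x;H^{1/3}(0,T))$ by \eqref{2.41} (restrict $t$ to $(0,T)$; multiplication by the continuous bounded scalar $\eta(2-x)$ preserves $C_b$ in $x$), so $\Psi_0\in C_b(\overline{\mathbb R}_+;H^{1/3}(0,T))$. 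For $x<1$, $\Psi_0(\cdot,x)=\mu_0+J^+(\cdot,x;\mu_1)\to\mu_0+\mu_1=\mu$ in $H^{1/3}$ as $x\to+0$ by \eqref{2.41}, which gives \eqref{2.47}. Differentiating, $\Psi_{0x}=J^+_x\,\eta(2-x)-(\mu_0+J^+)\eta'(2-x)$: the first term lies in $C_b(\overline{\mathbb R}_+;L_2(0,T))$ by \eqref{2.41}, and the second is supported in $x\in[1,2]$ where $J^+\in C_b([1,2]_x;L_2(\mathbb R^t))$ by \eqref{2.40} and $\mu_0\in C_b^\infty$, hence also in $C_b(\overline{\mathbb R}_+;L_2(0,T))$, proving \eqref{2.48}. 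Finally, if $\mu\in H^s(0,T)$, $s>1/3$, then $\|\Psi_0(t,\cdot)\|_{C^1_{b,+}}\leq c\bigl(|\mu_0(t)|+\|J^+(t,\cdot;\mu_1)\|_{C^1_{b,+}}\bigr)$; since $\mu_0\in L_2(0,T)$ and $\|J^+(\cdot,\cdot;\mu_1)\|_{L_2(\mathbb R^t;C^1_{b,+})}\leq c(s)\|\mu_1\|_{H^s(\mathbb R)}<\infty$ by \eqref{2.44}, restriction to $(0,T)$ yields \eqref{2.49}.

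I expect the only genuine obstacle to be making the cancellation of the second paragraph airtight near $x=0$: a priori $F_0=\mathcal L\Psi_0$ involves $\Psi_{0xxx}$, hence $J^+_{xxx}$ up to $x=0$, where $J^+$ is only known to be of class $H^{1/3}$ in $t$; the point is precisely that $\mathcal L$ annihilates the singular part $J^+$, so that $F_0\equiv i\mu_0'(t)$ on the whole strip $0<x<1$ and therefore extends smoothly to $x=0$. Everything else is the routine multiplication of the estimates \eqref{2.40}--\eqref{2.44} by the fixed cut-off.
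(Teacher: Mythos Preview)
Your proposal is correct and follows precisely the approach the paper has in mind: the paper's own proof is the single sentence ``The assertion of the lemma obviously follows from the aforementioned properties of the function $J_+$,'' and your argument is exactly the unpacking of that sentence --- the extension of $\mu$ to $\mathbb R$, the smoothness of $\mu_0$ from compact Fourier support, the cancellation $\mathcal L J^+=0$ yielding $F_0=i\mu_0'$ on $\{0<x<1\}$, and the routine multiplication of \eqref{2.40}--\eqref{2.44} by the cut-off $\eta(2-x)$ for the remaining claims. There is nothing to add.
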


\begin{proof}
The assertion of the lemma obviously follows from the aforementioned properties of the function $J_+$.
\end{proof}

\section{Existence}\label{S3}

Consider an initial-boundary value problem for an auxiliary equation
\begin{multline}\label{3.1}
i U_t +a U_{xx} + i b U_x +i U_{xxx} +\lambda g(|U+\Psi|) (U+\Psi) +i\beta \bigl(g(|U+\Psi|) (U+\Psi)\bigr)_x \\+ i \gamma \bigl(g(|U+\Psi|)\bigr)_x (U+\Psi) =F,
\end{multline}
where $\Psi = \Psi(t,x)$, $F= F(t,x)$, with initial and boundary conditions
\begin{equation}\label{3.2}
U(0,x) = U_0(x), \quad  U(t,0) =0.
\end{equation}
The notion of a weak solution to this problem is similar to Definition \ref{D1.1}.

\begin{lemma}\label{L3.1}
Let $g\in C^1_{b,+}$, $\psi(x) \equiv e^{2\alpha x}$ for certain $\alpha>0$, $U_0 \in L_{2,+}^{\psi(x)}$, $F\in L_1(0,T;L_{2,+}^{\psi(x)})$, $\Psi\in C([0,T];L_{2,+}^{\psi(x)})$, $\Psi_x \in L_2(0,T;L_{2,+}^{\psi(x)})$. Then there exists $t_0 \in (0,T]$, such that problem \eqref{3.1}, \eqref{3.2} has a unique weak solution $U \in X^{\psi(x)} (\Pi_{t_0}^+)$.
\end{lemma}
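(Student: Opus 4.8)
The plan is to solve the nonlinear problem \eqref{3.1}, \eqref{3.2} on a short time interval by a fixed-point (contraction) argument, using the linear theory of Lemma~\ref{L2.5} as the solution operator for the linear part. First I would set up the integral formulation: given $V\in X^{\psi(x)}(\Pi_{t_0}^+)$, define the nonlinear source
\begin{equation*}
f_0[V] \equiv F - \lambda g(|V+\Psi|)(V+\Psi) - i\gamma\bigl(g(|V+\Psi|)\bigr)_x(V+\Psi),\qquad
f_1[V] \equiv -\beta\, g(|V+\Psi|)(V+\Psi),
\end{equation*}
so that \eqref{3.1} reads $iU_t+aU_{xx}+ibU_x+iU_{xxx} = f_0[V]+ \partial_x f_1[V]$, and let $\mathcal A(V)$ be the unique weak solution $U\in X^{\psi(x)}(\Pi_{t_0}^+)$ of the linear problem \eqref{2.4}, \eqref{2.5} with data $u_0 = U_0$, $\mu\equiv 0$, right-hand side $f_0[V]$, and the $\partial_x f_1[V]$ term handled by the $f_1$-slot of Lemma~\ref{L2.5}. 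A weak solution of \eqref{3.1}, \eqref{3.2} is exactly a fixed point of $\mathcal A$. Here it is essential that $\psi(x)\equiv e^{2\alpha x}$ is admissible with $\psi'$ admissible, and that $\psi^2/\psi' = \psi/(2\alpha)$, so the weight in the $f_1$-estimate \eqref{2.29} is comparable to $\psi$ itself.

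Next I would verify that $\mathcal A$ maps a closed ball $B_R$ of $X^{\psi(x)}(\Pi_{t_0}^+)$ into itself and is a contraction there, for $t_0$ small. Since $g\in C^1_{b,+}$, the map $w\mapsto g(|w|)w$ is globally Lipschitz from $\mathbb C$ to $\mathbb C$ with constant depending only on $\|g\|_{C^1_{b,+}}$, and likewise $w\mapsto g(|w|)$ is bounded and Lipschitz; hence pointwise $|f_0[V]-f_0[\widetilde V]|\lesssim |V-\widetilde V| + |\partial_x(g(|V+\Psi|))(V+\Psi)-\partial_x(g(|\widetilde V+\Psi|))(\widetilde V+\Psi)|$, and the $\gamma$-term expands via the chain rule into products of a bounded factor times $(V+\Psi)_x$ or $(\widetilde V+\Psi)_x$ times $(V+\Psi)$ or the difference. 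Estimating in $L_1(0,t_0;L_{2,+}^{\psi(x)})$ and $L_2(0,t_0;L_{2,+}^{\psi(x)})$ respectively, one controls the $L^\infty_tL^2$ norms of $V,\Psi$ by $R$ and the constants of the problem, while the $u_x$-type factors are controlled by the $X^{\psi(x)}$-norm and produce, after Cauchy--Schwarz in $t$, a factor $t_0^{1/2}$ (for the $f_0$ contribution) resp. an absorbable quantity (for the $f_1$ contribution, since it already sits in an $L^2_t$ slot, one uses smallness of $t_0$ together with the a~priori bound). Combining with \eqref{2.29}, one gets $\|\mathcal A(V)\|_{X^{\psi(x)}(\Pi_{t_0}^+)}\le c(T)\|U_0\|_{L_{2,+}^{\psi(x)}} + c(R,T,g,\Psi,F)(t_0^{1/2}+\text{small})$, and similarly $\|\mathcal A(V)-\mathcal A(\widetilde V)\|\le c(R,\dots)(t_0^{1/2}+\text{small})\|V-\widetilde V\|$; choosing first $R$ large (absorbing the linear part), then $t_0$ small, yields self-mapping and contraction.

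The main obstacle I anticipate is the treatment of the term $i\gamma\bigl(g(|U+\Psi|)\bigr)_x(U+\Psi)$, since differentiating $g(|w|)$ produces a factor $g'(|w|)$ times $\Re(\bar w w_x)/|w|$, which is only bounded (by $\|g'\|_\infty|w_x|$) but not obviously Lipschitz in a way compatible with the weighted norms near the zero set of $w$; the right way is to never expand $g(|w|)$ into $g$ and $|\cdot|$ separately but to observe that $w\mapsto g(|w|)$, as a function $\mathbb C\to\mathbb R$, is globally Lipschitz (its gradient is $g'(|w|)w/|w| + \dots$, bounded by $\|g\|_{C^1}$), so that $\partial_x(g(|U+\Psi|))$ is an $L^2_{t,x,\mathrm{loc}}$ function bounded pointwise by $\|g\|_{C^1}|(U+\Psi)_x|$, and the difference $\partial_x(g(|V+\Psi|)) - \partial_x(g(|\widetilde V+\Psi|))$ is handled by writing it as $\partial_x$ of a Lipschitz function of the difference plus a commutator, all of which land in $L^2_t L^2_{x,\psi}$ after using that $(V+\Psi)_x,(\widetilde V+\Psi)_x\in L^2(0,t_0;L_{2,+}^{\psi'(x)})$ and $\psi'\asymp\psi$. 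Uniqueness in $X^{\psi(x)}(\Pi_{t_0}^+)$ follows from the same Lipschitz estimates applied to two solutions, giving (after possibly shrinking $t_0$ and then iterating over $[0,T]$ in finitely many steps, or directly via a Gronwall-type argument) that their difference vanishes; alternatively it is immediate from the contraction since any weak solution is a fixed point of $\mathcal A$ on some ball.
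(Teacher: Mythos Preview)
Your approach is the same as the paper's: a contraction argument in $X^{\psi(x)}(\Pi_{t_0}^+)$ using Lemma~\ref{L2.5} as the linear solution operator, exploiting $\psi^2/\psi'\sim\psi$ for the exponential weight. The one substantive difference is how the $\gamma$-term is split. The paper applies the Leibniz rule \emph{before} defining $f_0,f_1$:
\[
i\gamma\bigl(g(|w|)\bigr)_x w \;=\; i\gamma\bigl(g(|w|)w\bigr)_x \;-\; i\gamma\, g(|w|)\,w_x,
\]
so that $f_1=-i(\beta+\gamma)g(|V+\Psi|)(V+\Psi)$ and $f_0=F-\lambda g(|V+\Psi|)(V+\Psi)+i\gamma g(|V+\Psi|)(V+\Psi)_x$. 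With this splitting both the self-mapping and the Lipschitz estimates become elementary: $|f_1|\le c(|V|+|\Psi|)$, $|f_0|\le c(|F|+|V_x|+|\Psi_x|+|V|+|\Psi|)$, and for the differences one gets directly
\[
|f_0(V)-f_0(\widetilde V)|\le c|W_x|+c\bigl(|V_x|+|\widetilde V_x|+|\Psi_x|+|V|+|\widetilde V|+|\Psi|+1\bigr)|W|,
\]
with no need to differentiate $g(|w|)$ or discuss its regularity near $\{w=0\}$.

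Your decomposition keeps $(g(|V+\Psi|))_x(V+\Psi)$ in $f_0$, which forces you to estimate $\partial_x\bigl(g(|u|)-g(|\widetilde u|)\bigr)$ pointwise; this quantity has no bound involving $|W|$. Your proposed fix (``$\partial_x$ of a Lipschitz function of the difference plus a commutator'') is exactly the Leibniz identity above applied to the difference, moving the $\partial_x$-piece back into the $f_1$-slot---so it works, but it is the paper's rewriting done one step later. It would be cleaner, and avoids the obstacle you flag, to adopt the paper's splitting from the start. The contraction then closes with factors $t_0^{1/2}$ for the self-map and $t_0^{1/4}$ for the Lipschitz estimate (the latter via the interpolation inequality \eqref{2.2} applied to products such as $VW$ and $V_xW$, as in \eqref{3.5}--\eqref{3.6}).
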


\begin{proof}
We apply the contraction principle. For $t_0\in(0,T]$ define a mapping $\Lambda$ on $X^{\psi(x)}(\Pi_{t_0}^+)$ as follows: $U=\Lambda V\in X^{\psi(x)}(\Pi_{t_0}^+)$ is a weak solution to a linear problem
\begin{multline}\label{3.3}
i U_t +a U_{xx} + i b U_x +i U_{xxx} \\ = F -\lambda g(|V+\Psi|) (V+\Psi) -i\beta \bigl(g(|V+\Psi|) (V+\Psi)\bigr)_x - i \gamma \bigl(g(|V+\Psi|)\bigr)_x (V+\Psi)
\end{multline}
in $\Pi_{t_0}^+$ with initial and boundary conditions \eqref{3.2}.

Let
\begin{gather*}
f_1 = f_1(V) \equiv -i (\beta+\gamma) g(|V+\Psi|) (V+\Psi),\\
f_0 = f_0(V) \equiv F - \lambda g(|V+\Psi|) (V+\Psi) + i\gamma g(|V+\Psi|) (V_x +\Psi_x).
\end{gather*}
Note that $\psi^{2}/\psi' \sim \psi$. Then since $|f_1| \leq c (|V| +|\Psi|)$
\begin{multline*}
\|f_1\|_{L_2(0,t_0;L_{2,+}^{\psi^2(x)/\psi'(x)})} \leq c t_0^{1/2} \|f_1\|_{C([0,t_0];L_{2,+}^{\psi(x)})} \\ \leq
c_1 t_0^{1/2} \bigl( \|V\|_{C([0,t_0];L_{2,+}^{\psi(x)})} + \|\Psi\|_{C([0,t_0];L_{2,+}^{\psi(x)})} \bigr)  \leq
c_2 t_0^{1/2} \|V\|_{X^{\psi(x)}(\Pi_{t_0}^+)} + c_2,
\end{multline*}
since $|f_0| \leq c (|F| +|V_x| + |\Psi_x| + |V| + |\Psi|)$
\begin{multline*} 
\|f_0\|_{L_1(0,t_0; L_{2,+}^{\psi(x)})} \leq c \|F\|_{L_1(0,t_0; L_{2,+}^{\psi(x)})} + c t_0^{1/2} \bigl( \|V_x\|_{L_2(0,t_0; L_{2,+}^{\psi(x)})} + 
\|\Psi_x\|_{L_2(0,t_0; L_{2,+}^{\psi(x)})} \\+ \|V\|_{L_2(0,t_0; L_{2,+}^{\psi(x)})} +\|\Psi\|_{L_2(0,t_0; L_{2,+}^{\psi(x)})} \bigr) \leq 
c_1 t_0^{1/2}  \|V\|_{X^{\psi(x)}(\Pi_{t_0}^+)} +c_1.
\end{multline*}
Therefore, Lemma~\ref{L2.5} provides that the mapping $\Lambda$ exists. Moreover, according to inequality \eqref{2.29}
\begin{equation}\label{3.4}
\|\Lambda V\|_{X^{\psi(x)}(\Pi_{t_0}^+)} \leq c + ct_0^{1/2} \|V\|_{X^{\psi(x)}(\Pi_{t_0}^+)}.
\end{equation}
Next, let $V, \widetilde V \in X^{\psi(x)}(\Pi_{t_0}^+)$, $W \equiv V- \widetilde V$.
Note that
$$
|f_1(V) - f_1(\widetilde V)| \leq c \bigl( |V| + |\widetilde V| + |\Psi| +1\bigr) |W|,
$$
where, for example, since $\psi(x) \geq 1$
\begin{multline}\label{3.5}
\|V W\|_{L_2(0,t_0;L_{2,+}^{\psi^2(x)/\psi'(x)})} \leq c \Bigl(\iint_{\Pi_{t_0}^+} |V W|^2 \psi \,dxdt \Bigr)^{1/2}   \\ \leq
c \Bigl( \int_0^{t_0} \sup\limits_{x\geq 0} |V|^2 \,dt \Bigr)^{1/2} \sup\limits_{t\in [0,T]} \Bigl( \int |W|^2 \psi\, dx\Bigr)^{1/2}  \\ \leq
c  \Bigl[ \int_0^{t_0} \Bigl(\int |V_x|^2 \,dx \int |V|^2 \,dx \Bigr)^{1/2} \,dt \Bigr]^{1/2} \|W\|_{X^{\psi(x)}(\Pi_{t_0}^+)}  \\ \leq
c_1 t_0^{1/4} \|V\|_{X^{\psi(x)}(\Pi_{t_0}^+)} \|W\|_{X^{\psi(x)}(\Pi_{t_0}^+)}.
\end{multline}
Therefore,
\begin{multline*}
\|f_1(V) - f_1(\widetilde V)\|_{L_2(0,t_0;L_{2,+}^{\psi^2(x)/\psi'(x)})}  \\ \leq c t_0^{1/4} \bigl( \|V\|_{X^{\psi(x)}(\Pi_{t_0}^+)} + \|\widetilde V\|_{X^{\psi(x)}(\Pi_{t_0}^+)} +1 \bigr)
\|W\|_{X^{\psi(x)}(\Pi_{t_0}^+)}.
\end{multline*}
Note that
$$
|f_0(V) - f_0(\widetilde V)| \leq c|W_x| + c \bigl(|V_x| + |\widetilde V_x| + |\Psi_x| + |V| + |\widetilde V| + |\Psi| +1\bigr) |W|,
$$
where, for example, since $\psi' \sim \psi$
$$
\|W_x\|_{L_1(0,t_0; L_{2,+}^{\psi(x)})} \leq c t_0^{1/2} \|W_x\|_{L_2(0,t_0; L_{2,+}^{\psi'(x)})} \leq c t_0^{1/2} \|W\|_{X^{\psi(x)}(\Pi_{t_0}^+)}
$$
and similarly to \eqref{3.5}
\begin{multline}\label{3.6}
\|V_x W\|_{L_1(0,t_0;L_{2,+}^{\psi(x)})} \leq  \int_0^{t_0} \sup\limits_{x\geq 0} |W| \Bigl( \int |V_x|^2 \psi\, dx\Bigr)^{1/2} \,dt \\ \leq 
\int_0^{t_0} \Bigl(\int |W_x|^2 \,dx \int |W|^2 \,dx \Bigr)^{1/4}  \Bigl( \int |V_x|^2 \psi\, dx\Bigr)^{1/2} \,dt \\ \leq
c t_0^{1/4} \|V\|_{X^{\psi(x)}(\Pi_{t_0}^+)} \|W\|_{X^{\psi(x)}(\Pi_{t_0}^+)}.
\end{multline}
Therefore,
\begin{multline*}
\|f_0(V) - f_0(\widetilde V)\|_{L_1(0,t_0;L_{2,+}^{\psi(x)})}  \\ \leq c t_0^{1/4} \bigl( \|V\|_{X^{\psi(x)}(\Pi_{t_0}^+)} + \|\widetilde V\|_{X^{\psi(x)}(\Pi_{t_0}^+)} +1 \bigr)
\|W\|_{X^{\psi(x)}(\Pi_{t_0}^+)}.
\end{multline*}
As a result, according to inequality \eqref{2.29} 
\begin{equation}\label{3.7}
\|\Lambda V-\Lambda\widetilde{V}\|_{X^{\psi(x)}(\Pi_{t_0}^+)}\leq
c t_0^{1/4}\|V-\widetilde{V}\|_{X^{\psi(x)}(\Pi_{t_0}^+)}
\end{equation}
Assertion of the lemma follows from inequalities \eqref{3.4} and \eqref{3.7}.
\end{proof}

\begin{proof}[Proof of Existence Part of Theorem~\ref{T1.1}]
For $h\in (0,1]$ define functions
\begin{gather}\label{3.8}
g'_h(\theta) \equiv p\theta^{p-1} \eta (2- h\theta),  \quad g_h(\theta) \equiv \int_0^\theta g'_h(y) \,dy, \quad \theta\geq 0,\\
\label{3.9}
f_h(t,x) \equiv f(t,x)\eta(1/h-x), \quad u_{0h}(x)\equiv u_0(x)\eta(1/h-x)
\end{gather}
and consider a set of initial-boundary value problems for an equation
\begin{equation}\label{3.10}
i u_t  + a u_{xx} + i b u_x + i u_{xxx} + \lambda g_h(|u|) u +
 i \beta \bigl( g_h(|u|) u\bigr)_x + i \gamma \bigl( g_h(|u|)\bigr)_x u= f_h,
\end{equation}
with initial and boundary conditions
\begin{equation}\label{3.11}
u\big|_{t=0} = u_{0h}(x), \quad u\big|_{x=0} =0.
\end{equation}
Note that $g_h(\theta)= \theta^p$ if $\theta \leq 1/h$; $0 \leq g_h(\theta) \leq \theta^p$, $g_h(\theta) \leq (2/h)^p$, $0 \leq g'_h(\theta) \leq p\theta^{p-1}$, $g'_h(\theta)  \leq p(2/h)^{p-1}$  $\forall \theta$. 

Lemma~\ref{L3.1}, where $\Psi \equiv 0$, implies that there exist $t_0\in (0,T]$ and a unique solution to this problem $u_h\in X^{e^{2\alpha x}}(\Pi_{t_0}^+)$ for $\alpha = c(1)/2$, where $c(1)$ is the constant from \eqref{1.6}.

Next, establish appropriate estimates on functions $u_h$ uniform with respect to~$h$ (we drop the subscript $h$ in intermediate steps for simplicity). First, note that 
\begin{equation}\label{3.12}
g(|u|)u, g(|u|)u_x, g'(|u|) |u|_x u \in L_1(0,t_0; L_{2,+}^{\psi(x)}) 
\end{equation}
(the last function can be estimated similarly to \eqref{3.6} since $\bigl||u|_x\bigr| \leq |u_x|$, see, for example \cite[Lemma B.1]{F23}),
so the hypothesis of Lemma~\ref{L2.5} is satisfied (for $f_1 \equiv 0$). Then equality \eqref{2.30} provides that for both $\rho(x)\equiv 1$ and $\rho(x) \equiv \psi(x)$
\begin{multline}\label{3.13}
\frac{d}{dt}\int |u|^2 \rho\,dx  + 3\int |u_{x}|^2 \rho'\,dx  \leq  2a \Im \int u_x \bar u \rho' \,dx + b \int |u|^2 \rho' \,dx  \\+
\int |u|^2 \rho'''\, dx + 2 \Im \int \bigl[f -  \lambda g(|u|)u - i\beta \bigl(g(|u|)u\bigr)_x - i\gamma \bigl(g(|u|)\bigr)_x u \bigr]\bar u\rho\,dx.
\end{multline}
Let $\displaystyle g^*(\theta) \equiv \int_0^\theta g(\sqrt{y}) \,dy$ for $\theta\geq 0$ .
Then
\begin{multline}\label{3.14}
-2 \Im i\int \bigl(g(|u|)u\bigr)_x \bar u \rho \,dx = \int g(|u|) (|u|^2)_x \rho \,dx + 2 \int g(|u|) |u|^2 \rho' \,dx \\ =
- \int g^*(|u|^2) \rho' \,dx + 2 \int g(|u|) |u|^2 \rho' \,dx,
\end{multline}
\begin{multline}\label{3.15}
-2 \Im i\int \bigl(g(|u|)\bigr)_x u \bar u \rho \,dx = 2 \int g(|u|) (|u|^2)_x \rho \,dx + 2\int g(|u|) |u|^2 \rho' \,dx \\ =
- 2\int g^*(|u|^2) \rho' \,dx + 2 \int g(|u|) |u|^2 \rho' \,dx.
\end{multline}
Therefore, it follows from \eqref{3.13}--\eqref{3.15} that
\begin{multline}\label{3.16}
\frac{d}{dt}\int |u|^2 \rho\,dx  + 3\int |u_{x}|^2 \rho'\,dx  \leq  2a \Im \int u_x \bar u \rho' \,dx + b \int |u|^2 \rho' \,dx  \\+
\int |u|^2 \rho'''\, dx + 2 \Im \int f \bar u \rho \,dx - (\beta +2\gamma) \!\int g^*(|u|^2) \rho' \,dx +2(\beta +\gamma)\! \int g(|u|) |u|^2 \rho' \,dx.
\end{multline}
Choosing in \eqref{3.16} $\rho\equiv 1$, we obtain, that uniformly with respect to $h$ and $t_0$
\begin{equation}\label{3.17}
\|u_h\|_{C([0,t_0];L_{2,+})} \leq c(T, \|u_0\|_{L_{2,+}}, \|f\|_{L_1(0,T;L_{2,+})}).
\end{equation}
Now choose $\rho(x) \equiv \psi(x)$. Note that uniformly with respect to $h$
\begin{equation}\label{3.18}
\bigl|  g_h^*(|u|^2) \bigr|, g_h(|u|) |u|^2\leq c|u|^{p+2}.
\end{equation}
Let $q=p+2$, $s=s(q)$ from \eqref{2.1}, $\psi_1(x) \equiv \psi'(x)$, $\psi_2(x) \equiv \bigl(\psi'(x)\bigr)^{\frac{2(1-qs)}{q(1-2s)}}$ (note that $qs=p/4<1$). 
Applying interpolating inequality \eqref{2.2}, we obtain that
\begin{multline}\label{3.19}
\int |u|^{p+2}\psi'\,dx = \int |u|^{q} \psi_1^{qs} \psi_2^{q(\frac12-s)} \,dx \\ \leq 
c\Bigl(\int (|u_{x}|^2 + |u|^2)\psi_1 \,dx\Bigr)^{qs} \Bigl(\int |u|^2 \psi_2 \,dx\Bigr)^{q(\frac12-s)} \\ =
c\Bigl(\int (|u_{x}|^2 + |u|^2)\psi' \,dx\Bigr)^{qs} \Bigl(\int (|u|^2\psi')^{\frac{2(1-qs)}{q(1-2s)}} |u|^{\frac{2(q-2)}{q(1-2s)}}\, dx \Bigr)^{q(\frac12-s)} \\ \leq 
c\Bigl(\int (|u_{x}|^2 + |u|^2)\psi' \,dxdy\Bigr)^{p/4} \Bigl(\int |u|^2\psi' \,dx\Bigr)^{(4-p)/4} \Bigl(\int |u|^2 \,dx\Bigr)^{p/2}.
\end{multline}
Since the norm of the functions $u_h$ in the space $L_{2,+}$ is already  estimated in \eqref{3.17}, inequality \eqref{3.16} yields that that uniformly with respect to $h$ and $t_0$
\begin{equation}\label{3.20}
\|u_h\|_{X^{\psi(x)}(\Pi_{t_0}^+)} \leq c(T, \|u_0\|_{L_{2,+}^{\psi(x)}}, \|f\|_{L_1(0,T;L_{2,+}^{\psi(x)})}).
\end{equation}

Note, that repeating the previous argument for $\rho(x) \equiv e^{2\alpha x}$, we obtain that uniformly with respect to $t_0$
\begin{equation}\label{3.21}
\|u_h\|_{X^{e^{2\alpha x}}(\Pi_{t_0}^+)} \leq c(T,h)
\end{equation}
and, therefore, any local solution $u_h$ can be extended to whole time segment $[0,T]$ in the space $X^{e^{2\alpha x}}(\Pi_{T}^+)$ and uniformly with respect to $h$
\begin{equation}\label{3.22}
\|u_h\|_{X^{\psi(x)}(\Pi_{T}^+)} \leq c(T, \|u_0\|_{L_{2,+}^{\psi(x)}}, \|f\|_{L_1(0,T;L_{2,+}^{\psi(x)})}).
\end{equation}

Write the analogue of \eqref{3.16}, where $\rho(x)$ is substituted by $\rho_0(x-x_0)$ for any $x_0\geq 0$. Then it easily follows that (see \eqref{1.9}) uniformly with respect to {h}
\begin{equation}\label{3.23}
\sigma^+ (u_{hx};T) \leq c(T, \|u_0\|_{L_{2,+}}, \|f\|_{L_1(0,T;L_{2,+})}).
\end{equation}

Make one auxiliary calculation. Let $u\in L_\infty(0,T; L_{2,+})$, $\sigma^+(u_x;T) <\infty$, then for any natural $n$ and $Q_{T,n} = (0,T)\times (n-1,n)$  by virtue of \eqref{2.3}  uniformly with respect to $n$ if $1\leq p \leq 5$
\begin{multline}\label{3.24}
\iint_{Q_{T,n}} |u|^{p+1}\,dx dt  \\ \leq 
c \int_0^T\! \Bigl[\Bigl(\int_{n-1}^n\! |u_x|^2\,dx\Bigr)^{(p-1)/4} \!\Bigl(\int_{n-1}^n\! |u|^2\,dx\Bigr)^{(p+3)/4}\,dt  + \Bigl(\int_{n-1}^n\! |u|^2 \,dx\Bigr)^{(p+1)/2}\Bigr] \,dt \\ \leq c \Bigl[T^{(5-p)/4}\|u\|_{L_\infty(0,T;L_{2,+})}^{(p+3)/2}( \sigma^+(u_x;T))^{(p-1)/2} + 
T \|u\|_{L_\infty(0,T;L_{2,+})}^{p+1} \Bigr],
\end{multline}
and if $1\leq p\leq 3$
\begin{multline}\label{3.25}
\iint_{Q_{T,n}} |u|^p |u_x|\,dx dt \leq \Bigl(\iint_{Q_{T,n}} |u|^{2p}\,dx dt\Bigr)^{1/2} \Bigl(\iint_{Q_{T,n}} |u_x|^2\,dx dt\Bigr)^{1/2} \\ \leq
c \Bigl[T^{(3-p)/4}  \|u\|_{L_\infty(0,T;L_{2,+})}^{(p+1)/2} ( \sigma^+(u_x;T))^{(p+1)/2}  + T  \|u\|_{L_\infty(0,T;L_{2,+})}^{p} \sigma^+(u_x;T) \Bigr].
\end{multline}

As a result, in $\Pi_{T,n} = (0,T) \times (0,n) $ for any natural $n$ uniformly with respect to $h$
\begin{equation}\label{3.26}
\|g_h(|u_h|) u_h\|_{L_1(\Pi_{T,n})}, \|\gamma  g_h(|u_h|) u_{hx}\|_{L_1(\Pi_{T,n})} \leq c(n).
\end{equation}
Then from equation \eqref{3.10} itself it follows that uniformly with respect to $h$
\begin{equation}\label{3.27}
\|u_{ht}\|_{L_1(0,T;H^{-2}(0,n))}\leq c.
\end{equation}
Moreover, inequality \eqref{2.3} and estimate \eqref{3.23} provide that uniformly with respect to $h$
\begin{equation}\label{3.28}
\|u_h\|_{L_4(0,T; L_\infty(0,n))} \leq c(n).
\end{equation}
Since the embedding $H^1(0,n) \subset L_\infty(0,n)$ is compact, it follows from \cite[Section 9, Corollary 6]{Sim} that the set $u_h$ is relatively compact in $L_q(0,T;L_\infty(0,n))$ for $q<4$.
Extract a subsequence of the functions $u_h$, again denoted as $u_h$, such that as $h\to +0$
\begin{align*}
u_h\rightharpoonup u&\quad  *-\text{weakly in}\quad L_\infty(0,T; L_{2,+}^{\psi(x)}),\\
u_{hx} \rightharpoonup u_{x}&\quad  \text{weakly in}\quad L_2(0,T;L_{2,+}^{\psi'(x)}),\\
u_h\rightarrow u &\quad \text{strongly in}\quad L_q(0,T;L_\infty(0,n)) \quad \forall n \ \forall q\in [2,4).
\end{align*}
Let $\phi$ be a test function from Definition \ref{D1.1} with $\supp\phi \subset \overline{\Pi}_{T,n}$. According to \eqref{2.28} write the corresponding integral identity of \eqref{1.4}-type for the functions $u_h$ and pass to the limit when $h\to 0$. 
Note that
\begin{equation}\label{3.29}
g_h(|u_h|)u_h -|u|^p u = \Bigl[ g_h(|u_h|)u_h - g_h(|u|)u\Bigr] + \Bigl[g_h(|u|)u - |u|^p u \Bigr],
\end{equation}
where
$$
\bigl|  g_h(|u_h|)u_h - g_h(|u|)u \bigr| \leq c \bigl(|u_h|^4 + |u|^4\bigr) |u_h-u|
$$
and according to \eqref{3.17} and \eqref{3.28}
\begin{multline*}
\bigl\| |u_h|^4 |u_h-u|\bigr\|_{L_1(\Pi_{T,n})} \\ \leq 
c\|u_h-u\|_{L_2(0,T;L_\infty(0,n))} \|u_h\|^2_{L_\infty(0,T;L_{2,+})} \|u_{h}\|^2_{L_4(0,T;L_\infty(0,n))} \to 0.
\end{multline*}
The passage to the limit in the last term from the right-hand side of \eqref{3.29} is obvious since $|u|^p u \in L_1(\Pi_{T,n})$. Next, 
\begin{multline}\label{3.30}
g_h(|u_h|)u_{hx} -|u|^p u_x = \Bigl[ \bigl(g_h(|u_h|) -g_h(|u|)\bigr) u_{hx} \Bigr] +
\Bigl[ \bigl(g_h(|u|) -|u|^p\bigr) u_{hx}\Bigr]  \\ +
\Bigl[ |u|^p (u_{hx} -u_x)\Bigr].
\end{multline}
Here 
$$
\bigl| g_h(|u_h|) -g_h(|u|) \bigr| \leq c \bigl(|u_h|^{p-1} + |u|^{p-1}\bigr) |u_h-u|;
$$
if $2\leq p<3$ then for $q= 4/(4-p)$ (note that $2\leq q<4$)
\begin{multline*}
\bigl\| |u_h|^{p-1} |u_h-u| u_{hx}\|_{L_1(\Pi_{T,n})}  \\ \leq 
c\|u_h-u\|_{L_q(0,T;L_\infty(0,n))} \|u_h\|_{L_\infty(0,T;L_{2,+})}
\|u_h\|^{p-2}_{L_4(0,T;L_\infty(0,n))}  \|u_{hx}\|_{L_2(\Pi_{T,n})}\to 0,
\end{multline*}
if $p< 2$ then
\begin{multline*}
\bigl\| |u_h|^{p-1} |u_h-u| u_{h\,x}\|_{L_1(\Pi_{T,n})}  \\ \leq 
c\|u_h-u\|_{L_2(0,T;L_\infty(0,n))} \bigl(1+ \|u_h\|_{L_\infty(0,T;L_{2,+})}\bigr)
\|u_{hx}\|_{L_2(\Pi_{T,n})} \to 0.
\end{multline*}
The passage to the limit in the last two terms from the right-hand side of \eqref{3.30} is obvious since $|u|^p \in L_2(\Pi_{T,n})$ if $p\leq 3$.
As a result, the function $u \in L_\infty(0,T;L_{2,+}^{\psi(x)})$, $u_x \in L_2(0,T;L_{2,+}^{\psi'(x)})$ and $u$ satisfies integral identity \eqref{1.4} if $\phi(x) =0$ for $x\geq n$. 

Now consider the general case of the test function $\phi$. The well-known embedding $H^2(Q_{T,n}) \subset C_b(\overline{Q}_{T,n})$ yields, that
\begin{multline*}
\sup\limits_{(t,x) \in Q_{T,n}} |\phi(t,x)| \leq c \Bigl(\iint_{Q_{T,n}} \bigl( |\phi_{xx}|^2 + |\phi_{tt}|^2 + |\phi|^2 \bigr) \,dxdt  \\ \leq
 \frac{c_1}{(1+n^2)} \Bigl( \iint_{\Pi_T^+} (1+x^4) \bigl( |\phi_{xx}|^2 + |\phi_{tt}|^2 + |\phi|^2 \bigr) \,dxdt\Bigr)^{1/2}
\end{multline*}
and, therefore,
$$
\sum\limits_{n=0}^{+\infty} \sup\limits_{(t,x) \in Q_{T,n}} |\phi(t,x)| <\infty.
$$
With the use of \eqref{3.24}, \eqref{3.25} we obtain that $|u|^p u \phi, |u|^p u \phi_x, \gamma|u|^p u_x \phi \in L_1(\Pi_T^+)$ and approximating an arbitrary function $\phi$ from Definition \ref{D1.1} by functions, satisfying $\phi(t,x) =0$ for large $x$, and passing to the limit we derive that $u \in X_w^{\psi(x)}(\Pi_T^+)$ is the desired weak solution (the property $u\in C_w([0,T];L_{2,+}^{\psi(x)})$ is established by the standard argument).
\end{proof}

\begin{remark}\label{R3.1}
It follows from \eqref{3.22} that for the constructed weak solution under the hypothesis of Theorem \ref{T1.1}
\begin{equation}\label{3.31}
\|u\|_{X_w^{\psi(x)}(\Pi_{T}^+)} \leq c(T, \|u_0\|_{L_{2,+}^{\psi(x)}}, \|f\|_{L_1(0,T;L_{2,+}^{\psi(x)})}).
\end{equation}
\end{remark}

\begin{proof}[Proof of Existence Part of Theorem~\ref{T1.2}]
As in the proof of the previous theorem for $h\in (0,1]$ introduce functions $g_h$, $f_h$ and $u_{0h}$ by formulas \eqref{3.8}, \eqref{3.9} (note that here $0 \leq g_h(\theta) \leq \theta$)
and consider a set of initial-boundary value problems 
\begin{gather}\label{3.32}
i u_t  + a u_{xx} + i b u_x + i u_{xxx} + \lambda g_h(|u|) u +
 i \beta \bigl( g_h(|u|) u\bigr)_x = f_h, \\
\label{3.33}
u\big|_{t=0} = u_{0h}(x), \quad u\big|_{x=0} = \mu(t).
\end{gather}
Consider the functions $\Psi_0$ and $F_0$, defined in \eqref{2.45}, \eqref{2.46}; let
\begin{equation}\label{3.34}
U(t,x) \equiv u(t,x) - \Psi_0(t,x), \ F_h(t,x) \equiv f_h(t,x) - F_0(t,x), \ U_{0h}(x) \equiv u_{0h}(x) - \Psi_0(x).
\end{equation}
Then problem \eqref{3.32}, \eqref{3.33} is equivalent to a problem
\begin{multline}\label{3.35}
i U_t  + a U_{xx} + i b U_x + i U_{xxx} + \lambda g_h(|U+\psi_0|) (U+\Psi_0)  \\+
 i \beta \bigl( g_h(|U+\Psi_0|) (U+\psi_0)\bigr)_x = F_h, 
\end{multline}
\begin{equation}\label{3.36}
U\big|_{t=0} = U_{0h}(x), \quad U\big|_{x=0} = 0.
\end{equation}
Lemma~\ref{L3.1}, where $\Psi \equiv \Psi_0$, implies that there exist $t_0\in (0,T]$ and a unique solution to this problem $U_h\in X^{e^{2\alpha x}}(\Pi_{t_0}^+)$ for $\alpha = c(1)/2$, where $c(1)$ is the constant from \eqref{1.6}. Let $u_h(t,x) \equiv U_h(t,x) + \Psi_0(t,x)$.

Establish appropriate estimates on functions $u_h$ uniform with respect to~$h$ (we drop the subscript $h$ in intermediate steps for simplicity). Similarly to \eqref{3.13} 
for both $\rho(x)\equiv 1$ and $\rho(x) \equiv \psi(x)$
\begin{multline}\label{3.37}
\frac{d}{dt}\int |U|^2 \rho\,dx  + 3\int |U_{x}|^2 \rho'\,dx  \leq  2a \Im \int U_x \bar U \rho' \,dx + b \int |U|^2 \rho' \,dx  \\+
\int |U|^2 \rho'''\, dx + 2 \Im \int \bigl[F -  \lambda g(|u|)u - i\beta \bigl(g(|u|)u\bigr)_x \bigr]\bar U\rho\,dx.
\end{multline}
Here
\begin{multline}\label{3.38}
-2 \Im i\int \bigl(g(|u|)u\bigr)_x \bar U \rho \,dx = 2 \Re \int g(|u|)u \bar u_x \rho \,dx  -2 \Re \int g(|u|)u \bar \Psi_{0x} \rho \,dx \\ + 2 \int g(|u|) |u|^2 \rho' \,dx -
2 \Re \int g(|u|) u \bar\Psi_0 \rho' \,dx,
\end{multline}
where similarly to \eqref{3.14}
\begin{equation}\label{3.39}
2 \Re \int g(|u|)u \bar u_x \rho \,dx =
- \int g^*(|u|^2) \rho' \,dx -g^*(|\mu|^2) \rho(0),
\end{equation}
\begin{equation}\label{3.40}
- 2\Im \int g(|u|) u \bar U \rho \,dx = 2\Im \int g(|u|)u \bar\Psi_0 \rho \,dx.
\end{equation}
Therefore, it follows from \eqref{3.37}--\eqref{3.40} that
\begin{multline}\label{3.41}
\frac{d}{dt}\int |U|^2 \rho\,dx  + 3\int |U_{x}|^2 \rho'\,dx  \leq  2a \Im \int U_x \bar U \rho' \,dx + b \int |U|^2 \rho' \,dx  \\+
\int |U|^2 \rho'''\, dx + 2 \Im \int F \bar U \rho \,dx - \beta \int g^*(|u|^2) \rho' \,dx - \beta g^*(|\mu|^2) \rho(0) \\ 
-2\beta \Re \int g(|u|)u \bar \Psi_{0x} \rho \,dx +2\lambda \Im \int g(|u|)u \bar\Psi_0 \rho \,dx
+ 2\beta \int g(|u|) |u|^2 \rho' \,dx \\- 2\beta \Re \int g(|u|) u \bar\Psi_0 \rho' \,dx.
\end{multline}
Here for $j=0$ and $j=1$ 
\begin{equation}\label{3.42}
\Bigl| \int g(|u|)u \partial_x^j \bar \Psi_{0} \rho \,dx \Bigr| \leq \sup\limits_{x>0} |\partial_x^j \Psi_{0}| \int |U +\Psi_0|^2 \rho \,dx
\end{equation}
\begin{equation}\label{3.43}
\|g^*(|\mu|^2)\|_{L_2(0,T)} \leq c\|\mu\|_{L_6(0,T)} \leq c_1 \|\mu\|_{H^{1/3}(0,T)}.
\end{equation}
Therefore, choosing in \eqref{3.41} $\rho\equiv 1$, we obtain using the properties of the function $\Psi_0$ from Lemma \ref{L2.6} (in particular, $\Psi_0 \in L_2(0,T; C_{b,+}^1)$), that uniformly with respect to $h$ and $t_0$
\begin{equation}\label{3.44}
\|u_h\|_{C([0,t_0];L_{2,+})} \leq c(T, \|u_0\|_{L_{2,+}}, \|\mu\|_{H^s(0,T)}, \|f\|_{L_1(0,T;L_{2,+})}).
\end{equation}
Choosing $\rho(x) \equiv \psi(x)$ and repeating the argument in \eqref{3.19} for $p=1$ we derive that that uniformly with respect to $h$ and $t_0$
\begin{equation}\label{3.45}
\|u_h\|_{X^{\psi(x)}(\Pi_{t_0}^+)} \leq c(T, \|u_0\|_{L_{2,+}^{\psi(x)}}, \|\mu\|_{H^s(0,T)}, \|f\|_{L_1(0,T;L_{2,+}^{\psi(x)})}).
\end{equation}
The rest part of the proof is the same as for Theorem \ref{T1.1} in the case $p=1$, $\gamma=0$.
\end{proof}

\begin{remark}\label{R3.2}
It follows from \eqref{3.45} that for the constructed weak solution under the hypothesis of Theorem \ref{T1.2}
\begin{equation}\label{3.46}
\|u\|_{X_w^{\psi(x)}(\Pi_{T}^+)} \leq c(T, \|u_0\|_{L_{2,+}^{\psi(x)}}, \|\mu\|_{H^s(0,T)}, \|f\|_{L_1(0,T;L_{2,+}^{\psi(x)})}).
\end{equation}
\end{remark}

\section{Uniqueness and continuous dependence}\label{S4}

\begin{theorem}\label{T4.1}
Let $p\in [1,2]$, $\psi(x)$ be an admissible weight function, such that $\psi'(x)$ be also an admissible weight function and inequality \eqref{1.10} be verified. Then for any $T>0$ and $M>0$ there exists a constant $c=c(T,M)$, such that for any two weak solutions $u(t,x,y)$ and $\widetilde u(t,x,y)$ to problem \eqref{1.1}--\eqref{1.3}, satisfying $\|u\|_{X_w^{\psi(x)}(\Pi_T^+)}, \|\widetilde u\|_{X_w^{\psi(x)}(\Pi_T^+)} \leq M$, with corresponding data $u_0, \widetilde u_0\in L_{2,+}^{\psi(x)}$, $\mu, \widetilde\mu \in H^{1/3}(0,T)$, $f, \widetilde f\in L_1(0,T;L_{2,+}^{\psi(x)})$ the following inequality holds:
\begin{equation}\label{4.1}
\|u -\widetilde u\|_{X_w^{\psi(x)}(\Pi_T^+)} \leq c\bigl( \|u_0 - \widetilde u_0\|_{L_{2,+}^{\psi(x)}} + \|\mu - \widetilde\mu\|_{H^{1/3}(0,T)} +
\|f-\widetilde f\|_{L_1(0,T;L_{2,+}^{\psi(x)})}\bigr).
\end{equation} 
\end{theorem}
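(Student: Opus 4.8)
The plan is to establish the single a priori estimate \eqref{4.1}, from which both uniqueness and the Lipschitz continuous dependence follow at once. Put $w \equiv u - \widetilde u$. Since the boundary values may differ only in the case $p=1$, $\gamma=0$, the first step is to homogenize the boundary condition: let $\Psi_0$ and $F_0$ be the functions attached to $\mu-\widetilde\mu$ by \eqref{2.45}, \eqref{2.46}, and set $W \equiv w - \Psi_0$. Because $\Psi_0$ is supported in $\{x\le 2\}$, Lemma~\ref{L2.6} gives $\Psi_0 \in X^{\psi(x)}(\Pi_T^+)$ with $\|\Psi_0\|_{X^{\psi(x)}(\Pi_T^+)} + \|\Psi_0|_{t=0}\|_{L_{2,+}^{\psi(x)}} \le c\|\mu-\widetilde\mu\|_{H^{1/3}(0,T)}$ and $F_0 \in L_1(0,T;L_{2,+}^{\psi(x)})$. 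Subtracting the identity \eqref{1.4} for $\widetilde u$ from that for $u$ and using $\Psi_0$ to enlarge the class of test functions to that of \eqref{2.28}, one sees that $W$ is a weak solution of a linear problem \eqref{2.4}, \eqref{2.5} with zero boundary data, initial function $u_0-\widetilde u_0-\Psi_0|_{t=0}$, and right-hand side $(f-\widetilde f) - F_0 - \mathcal N$, where $\mathcal N$ is the difference of the nonlinear terms of \eqref{1.1} at $u$ and at $\widetilde u$. I would write this right-hand side as $f_0^* + \partial_x f_1^*$ with $f_1^* \equiv -i(\beta+\gamma)(|u|^p u - |\widetilde u|^p\widetilde u)$ (using $(|u|^p)_x u = (|u|^p u)_x - |u|^p u_x$ to collect the two $x$-derivative nonlinearities) and $f_0^*$ absorbing the remaining terms: $(f-\widetilde f) - F_0$, the term with $\lambda$, and $i\gamma(|u|^p u_x - |\widetilde u|^p\widetilde u_x)$.

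The second step is to verify that $f_0^* \in L_1(0,T;L_{2,+}^{\psi(x)})$ and $f_1^* \in L_2(0,T;L_{2,+}^{\psi^2(x)/\psi'(x)})$, so that Lemma~\ref{L2.5} applies and, together with Lemma~\ref{L2.3}, yields $W \in X^{\psi(x)}(\Pi_T^+)$ and the energy identity \eqref{2.30} for $W$ with $f_0 = f_0^*$, $f_1 = f_1^*$. This is the first place hypothesis \eqref{1.10} is used. From the elementary inequalities $\bigl||u|^p u - |\widetilde u|^p\widetilde u\bigr| \le c(|u|^p+|\widetilde u|^p)(|u|+|\widetilde u|)$, $\bigl||u|^p - |\widetilde u|^p\bigr| \le c(|u|^{p-1}+|\widetilde u|^{p-1})(|u|+|\widetilde u|)$, and $\bigl||u|_x\bigr| \le |u_x|$, together with $\|u\|_{X_w^{\psi(x)}(\Pi_T^+)}, \|\widetilde u\|_{X_w^{\psi(x)}(\Pi_T^+)} \le M$, the inclusion $|u|^p u - |\widetilde u|^p\widetilde u \in L_2(0,T;L_{2,+}^{\psi^2(x)/\psi'(x)})$ reduces, via the interpolating inequality \eqref{2.2} taken with $q = 2p+2$ (so that $q\,s(q) = p/2$) and $\psi_1 = \psi'$, $\psi_2 = \psi$, precisely to $(\psi')^{p+2}\psi^{p-2} \ge \mathrm{const}$, i.e. to \eqref{1.10}; the integrability in $t$ of the bounds so obtained is exactly what $p\le 2$ provides, since it makes the relevant H\"older exponents no smaller than $1$. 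I would also record here that, because $\psi \ge \psi(0) > 0$, for $p \le 2$ inequality \eqref{1.10} forces $\psi'(x) \ge \mathrm{const} > 0$; this keeps quantities such as $\sup_x |u(t,\cdot)|$ finite and controlled by $\|u\|_{X_w^{\psi(x)}}$ through \eqref{2.2} with $q = +\infty$, and will be used repeatedly.

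The heart of the proof is the estimate of the right-hand side of \eqref{2.30} for $W$, integrated over $(0,t)$. The term $\int|W|^2\psi\big|_{t=0}$ is bounded by $c(\|u_0-\widetilde u_0\|_{L_{2,+}^{\psi(x)}}^2 + \|\mu-\widetilde\mu\|_{H^{1/3}(0,T)}^2)$; the linear terms $2a\Im\int W_x\bar W\psi' + b\int|W|^2\psi' + \int|W|^2\psi'''$ are bounded, using $|\psi'|,|\psi'''|\le c\psi$, by $\varepsilon\int|W_x|^2\psi' + c\int|W|^2\psi$, the first part absorbed into the good term $3\int|W_x|^2\psi'$; the term $2\Im\int(f-\widetilde f)\bar W\psi$ is handled by Cauchy--Schwarz together with a supremum-type estimate, so that $\|f-\widetilde f\|_{L_1(0,T;L_{2,+}^{\psi(x)})}$ enters only through its full square; and the terms carrying $\Psi_0$ or $F_0$ (present only when $\mu\ne\widetilde\mu$, hence $\gamma=0$) are bounded, using the compact support of $\Psi_0$ in $x$ and Lemma~\ref{L2.6}, by $\varepsilon\int|W_x|^2\psi' + c(M,t)\int|W|^2\psi + c(M,t)\|\mu-\widetilde\mu\|_{H^{1/3}(0,T)}^2$. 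Every nonlinear difference term is quadratic in $W$ and $\Psi_0$; for each of them I would split off any factor $|W_x|$ by Young's inequality (absorbing $\varepsilon\int|W_x|^2\psi'$) and then reduce the remaining weighted integral, via \eqref{2.2} with $q=+\infty$ and \eqref{1.10}, to $\int|W|^2\psi$ times a coefficient lying in $L_1(0,T)$ with norm $\le c(M)$ (the constraint $p\le 2$ is again exactly what makes the H\"older exponents produced in this reduction admissible), plus a term proportional to $\|\mu-\widetilde\mu\|_{H^{1/3}(0,T)}^2$. Collecting these estimates one obtains, for a.e. $t\in(0,T)$,
\begin{equation*}
\frac{d}{dt}\int|W|^2\psi\,dx + |\mu_1(t)|^2\psi(0) + 2\int|W_x|^2\psi'\,dx \le a(t)\int|W|^2\psi\,dx + b(t),
\end{equation*}
with $\|a\|_{L_1(0,T)} \le c(M)$ and $\int_0^T b\,dt \le c(M)\bigl(\|u_0-\widetilde u_0\|_{L_{2,+}^{\psi(x)}}^2 + \|\mu-\widetilde\mu\|_{H^{1/3}(0,T)}^2 + \|f-\widetilde f\|_{L_1(0,T;L_{2,+}^{\psi(x)})}^2\bigr)$. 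A Grönwall argument — iterating over short time subintervals in order to absorb the $\sup_{[0,t]}$ arising from the $L_1$-in-time datum $f-\widetilde f$ — then bounds $\sup_{[0,T]}\int|W|^2\psi\,dx$, $\int_0^T\int|W_x|^2\psi'\,dx\,dt$ and $\int_0^T|\mu_1|^2\,dt$ by $c(T,M)$ times the square of the right-hand side of \eqref{4.1}; hence $\|W\|_{X_w^{\psi(x)}(\Pi_T^+)} \le c(T,M)$ times the right-hand side of \eqref{4.1}, and since $w = W + \Psi_0$ with $\|\Psi_0\|_{X_w^{\psi(x)}(\Pi_T^+)} \le c\|\mu-\widetilde\mu\|_{H^{1/3}(0,T)}$, estimate \eqref{4.1} follows.

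I expect the main difficulty to be the nonlinear step just described: for each difference term one has to distribute the weights $\psi$ and $\psi'$ between an $L_\infty$-factor (to be controlled through \eqref{2.2}) and the $L_2$-quantities $\int|W|^2\psi$, $\int|W_x|^2\psi'$ already at hand, in such a way that simultaneously the leftover weight power stays bounded — which is exactly the content of \eqref{1.10} — and the time exponent produced by H\"older's inequality is $\ge 1$ — which is exactly the role of the restriction $p\le 2$. A subsidiary, more technical point is that the identity \eqref{2.30} cannot be obtained by differentiating directly on $w$, since the a priori class $X_w^{\psi(x)}(\Pi_T^+)$ provides only $u_x \in L_2(0,T;L_{2,+}^{\psi'(x)})$; the identity must be routed through the linear Lemma~\ref{L2.5}, which is why the weighted integrability of $f_0^*$ and $f_1^*$ has to be verified at the outset.
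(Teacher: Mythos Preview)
Your proposal is correct and follows essentially the same route as the paper: homogenize the boundary via the potential attached to $\mu-\widetilde\mu$, recast $W=w-\Psi$ as the solution of a linear problem with right-hand side $f_0+\partial_x f_1$ (with $f_1=-i(\beta+\gamma)(|u|^pu-|\widetilde u|^p\widetilde u)$), invoke Lemma~\ref{L2.5} to obtain the weighted energy identity \eqref{2.30}, and then estimate every nonlinear contribution by the interpolating inequality \eqref{2.2} combined with \eqref{1.10} and $p\le 2$, finishing by Gr\"onwall. One small correction: the parenthetical ``hence $\gamma=0$'' is not part of the hypotheses of Theorem~\ref{T4.1} (the statement allows arbitrary $\gamma$ together with $\mu\ne\widetilde\mu$), and the paper's estimates \eqref{4.13}, \eqref{4.14} indeed treat the $\Psi$-terms without any such restriction --- but this does not affect your argument, since you never actually use $\gamma=0$ in the bounds.
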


\begin{proof}
Let the functions $\Psi_0$ and $F_0$ be defined by formulas \eqref{2.45}, \eqref{2.46} the functions $\widetilde\Psi_0$ and $\widetilde F_0$ in a similar way for $\widetilde\mu$ and $\Psi\equiv \Psi_0-\widetilde\Psi_0$. Then, in particular,
\begin{equation}\label{4.2}
\|\Psi\|_{X^{\psi(x)}(\Pi_T^+)} \leq c(T)\|\mu-\widetilde\mu\|_{H^{1/3}(0,T)}.
\end{equation}
Let $w(t,x) \equiv u(t,x) -\widetilde u(t,x)$, $W_0\equiv u_0-\widetilde u_0 - \Psi\big|_{t=0}$, $F\equiv f-\widetilde f -(F_0 - \widetilde F_0)$, then
\begin{gather}\label{4.3}
\|W_0\|_{L_{2,+}^{\psi(x)}} \leq \|u_0-\widetilde u_0\|_{L_{2,+}^{\psi(x)}} +
c(T)\|\mu-\widetilde\mu\|_{H^{1/3}(0,T)}, \\
\label{4.4}
\|F\|_{L_1(0,T;L_{2,+}^{\psi(x)})} \leq \|f-\widetilde f\|_{L_1(0,T;L_{2,+}^{\psi(x)})} +
c(T)\|\mu-\widetilde\mu\|_{H^{1/3}(0,T)}.
\end{gather}
Let 
\begin{gather*}
f_0 \equiv F - \lambda \bigl(|u|^p u - |\widetilde u|^p \widetilde u\bigr) + i\gamma \bigl(|u|^p u_x - |\widetilde u|^p \widetilde u_x\bigr), \\
f_1 \equiv -i(\beta + \gamma) \bigl( |u|^p u - |\widetilde u|^p \widetilde u\bigr).
\end{gather*} 
The function $W(t,x) \equiv  w(t,x) -\Psi(t,x)$ is a weak solution to an initial-boundary value problem in $\Pi_T^+$ for an equation
\begin{equation}\label{4.5}
i W_t +a W_{xx} +ib W_x + i W_{xxx} = f_0 + f_{1x}
\end{equation}
\begin{equation}\label{4.6}
W\big|_{t=0} =W_0,\qquad W\big|_{x=0}=0.
\end{equation}

Apply for the function $W$ Lemma~\ref{L2.5}. Note that inequality \eqref{1.10} implies that 
\begin{equation}\label{4.7}
(\psi/\psi')^{1/2} \leq c(\psi' \psi)^{p/4}
\end{equation}
and, therefore, according to \eqref{2.2}
\begin{multline}\label{4.8}
\Bigl(\int |u|^{2p} |u_x|^2 \psi\, dx\Bigr)^{1/2}   \leq
\bigl\| u (\psi/\psi')^{1/(2p)}\bigr\|^p_{L_{\infty,+}} \Bigl(\int |u_x|^{2} \psi'\,dx\Bigr)^{1/2} \\ \leq 
c \bigl\| u (\psi')^{1/4} \psi^{1/4}\bigr\|^p_{L_{\infty,+}} \bigl\|u_x (\psi')^{1/2}\bigr\|_{L_{2,+}} \\ \leq
c_1 \Bigl( \int (|u_{x}|^2 + |u|^2)\psi' \,dx\Bigr)^{(p+2)/4} \Bigl(\int |u|^2 \psi\, dxdy\Bigr)^{p/4},
\end{multline}
\begin{multline}\label{4.9}
\int |u|^{2(p+1)} \psi^2/\psi' \, dx \leq 
\bigl\| u (\psi/\psi')^{1/(2p)}\bigr\|^{2p}_{L_{\infty,+}} \int |u|^2 \psi \,dx \\ \leq
c \bigl\| u (\psi')^{1/4} \psi^{1/4}\bigr\|^{2p}_{L_{\infty,+}} \bigl\|u \psi^{1/2}\bigr\|^2_{L_{2,+}} \\ \leq
c_1 \Bigl( \int (|u_{x}|^2 + |u|^2)\psi' \,dx\Bigr)^{p/2} \Bigl(\int |u|^2 \psi\, dxdy\Bigr)^{(p+2)/2},
\end{multline}
so $|u|^p u_x \in L_1(0,T;L_{2,+}^{\psi(x)})$, $|u|^p u \in L_2(0,T; L_{2,+}^{\psi^2(x)/\psi'(x)})$ since $p\leq 2$.
As a result, we derive from \eqref{2.30} that for $t\in (0,T]$
\begin{multline}\label{4.10}
\frac{d}{dt}\int |W|^2 \psi\,dx  + 3\int |W_{x}|^2 \psi'\,dx  \leq 2a \Im \int W_x \bar W \psi' \,dx \\ + b \int |W|^2 \psi' \,dx  +
\int |W|^2 \psi'''\, dx + 2 \Im \int f_0 \bar W\psi\,dx -2 \Im \int f_1 (\bar W\psi)_x\,dx.
\end{multline}

Note that 
\begin{gather}\label{4.11}
\bigl| |u|^p u_x - |\widetilde u|^p \widetilde u_x \bigr| \leq c |u|^p |w_x| + c |\widetilde u_x| \bigl(|u|^{p-1} + |\widetilde u|^{p-1}\bigr) |w|, \\
\label{4.12}
\bigl| |u|^p u - |\widetilde u|^p \widetilde u \bigr| \leq c \bigl(|u|^p + |\widetilde u|^p\bigr) |w|.
\end{gather}
Then similarly to \eqref{4.7}, \eqref{4.8} 
\begin{multline}\label{4.13}
\int |\widetilde u_x| |u|^{p-1} |w W| \psi \,dx \leq \Bigl( \int |u|^{2(p-1)} |W|^2 |\widetilde u_x|^2 \psi \,dx\Bigr)^{1/2} \Bigl(\int |w|^2 \psi \,dx\Bigr)^{1/2} \\ \leq
c\|u (\psi')^{1/4} \psi^{1/4}\|^{p-1}_{L_{\infty,+}} \|W (\psi')^{1/4} \psi^{1/4}\|_{L_{\infty,+}} \|\widetilde u_x (\psi')^{1/2}\|_{L_{2,+}} \|w \psi^{1/2} \|_{L_{2,+}} \\ \leq
c_1 \Bigl(\int (|u_x|^2 + |\widetilde u_x|^2 + |u|^2 + |\widetilde u|^2) \psi' \,dx\Bigr)^{(p+1)/4} \Bigl( \int (|u|^2 + |\widetilde u|^2) \psi\,dx\Bigr)^{(p-1)/4} \\ \times
\Bigl( \int (|W_x|^2 + |W|^2)\psi' \,dx\Bigr)^{1/4} \Bigl( \int (|W|^2 + |\Psi|^2) \psi \,dx\Bigr)^{3/4}  \leq \varepsilon \int |W_x|^2 \psi' \,dx  \\ + c(\varepsilon) 
\Bigl[\Bigl(\int (|u_x|^2 + |\widetilde u_x|^2 + |u|^2 + |\widetilde u|^2) \psi' \,dx\Bigr)^{(p+1)/3} \Bigl( \int (|u|^2 + |\widetilde u|^2) \psi\,dx\Bigr)^{(p-1)/3} +1\Bigr] \\ \times
\int  (|W|^2 + |\Psi|^2) \psi \,dx \equiv \varepsilon \int |W_x|^2 \psi' \,dx + \omega(t) \int  (|W|^2 + |\Psi|^2) \psi \,dx,
\end{multline}
where $\varepsilon>0$ can be chosen arbitrarily small and $\omega \in L_1(0,T)$ since $p\leq 2$. Next, similarly to \eqref{4.7}, \eqref{4.9}
\begin{multline}\label{4.14}
\int |u|^p |w W_x| \psi \,dx \leq \Bigl( \int |u|^{2p} |w|^2 \psi^2/\psi' \,dx \Bigr)^{1/2} \Bigl( \int |W_x|^2 \psi' \,dx\Bigr)^{1/2} \\ \leq
c \|u (\psi')^{1/4} \psi^{1/4}\|^p_{L_{2,+}} \|w \psi^{1/2}\|_{L_{2,+}} \|W_x (\psi')^{1/2} \|_{L_2,+} \\ \leq
c_1 \Bigl(\int (|u_x|^2 + |u|^2)  \psi' \,dx\Bigr)^{p/4} \Bigl( \int |u|^2 \psi\,dx\Bigr)^{p/4} \Bigl( \int (|W|^2 + |\Psi|^2)\psi \,dx \Bigr)^{1/2} \\ \times
 \Bigl( \int |W_x|^2 \psi' \,dx\Bigr)^{1/2} \leq \varepsilon \int |W_x|^2 \psi' \,dx \\ +
 c(\varepsilon) \Bigl(\int (|u_x|^2 + |u|^2)  \psi' \,dx\Bigr)^{p/2} \Bigl( \int |u|^2 \psi\,dx\Bigr)^{p/2} \int (|W|^2 + |\Psi|^2)\psi \,dx \\ \equiv
 \varepsilon \int |W_x|^2 \psi' \,dx + \omega(t) \int  (|W|^2 + |\Psi|^2) \psi \,dx,
\end{multline}
where again $\varepsilon>0$ can be chosen arbitrarily small and $\omega \in L_1(0,T)$ since $p\leq 2$.
Then inequalities \eqref{4.2}--\eqref{4.4}, \eqref{4.10}--\eqref{4.14} provide the desired result.
\end{proof}

\begin{proof}[Proof of Uniqueness Part of Theorem~\ref{T1.1}]
The result on uniqueness and continuous dependence in Theorem~\ref{T1.1} follows from Theorem~\ref{T4.1} (where $\mu = \widetilde\mu \equiv 0$) and estimate \eqref{3.31}.
\end{proof}

\begin{proof}[Proof of Uniqueness Part of Theorem~\ref{T1.2}]
The result on uniqueness and continuous dependence in Theorem~\ref{T1.2} follows from Theorem~\ref{T4.1} (where $p=1$) and estimate \eqref{3.46}.
\end{proof}

\section*{Compliance with Ethical Standards}

The author declares that there is no conflict of interest. 







\begin{thebibliography}{99}

\bibitem{AMO}
A. Alkin, D. Mantzavinos and T.~\"Ozsari, {\it Local well-posedness of the higher-order nonlinear  Schr\"odinger equation on the half-line: single boundary condition case}, arXiv:2305.18202v1 [math.AP] 29 May 2023.

\bibitem{ASV}
M.~Alves, M.~Sep\'ulveda, and O.P.~Vera Villagran, {\it Smoothing properties for the higher-order nonlinear Schr\"odinger equation with constant coefficients}, Nonlinear Anal. Theory Methods Appl. {\bf 71} (2009), 948--966.

\bibitem{BOY}
A.~Batal, T.~\"Ozsari, and K.C.~Yilmaz, {\it Stabilization of higher order linear and nonlinear Schr\"odinger equations on a finite domain: Part I}, Evolution Equ. Control Theory {\bf 10} (2021), 861--919. 

\bibitem{BBV}
E.~Bisognin, V.~Bisognin, and O.P. Vera Villagran, {\it  Stabilization of solutions to higher-order nonlinear Schr\"odinger equation with localized damping}, Electronic J. Differential Equ. {\bf 2007} (2007),  No. 6, 1--18.

\bibitem{BV}
V.~Bisognin and O.P. Vera Villagran, {\it  On the unique continuation property for the higher order nonlinear Schr\"odinger equation with constant coefficients}, Turk. J. Math. {\bf 30} (2006), 1--38.

\bibitem{Car04}
X. Carvajal, {\it Local well-posedness for a higher order nonlinear Schr\"odinger equation in Sobolev spaces of negative indices}, Electronic J. Differential Equ. {\bf 2004} (2004), No. 13, 1--10.

\bibitem{Car06}
X.~Carvajal, {\it Sharp global well-posedness for a higher order Schr\"odinger equation}, J. Fourier Anal. Appl. {\bf 12} (2006), 53--70.

\bibitem{CL}
X.~Carvajal and F.~Linares, {\it A higher order nonlinear Schr\"odinger equation with variable coefficients}, Differential Integral Equ. {\bf 16} (2003), 1111--1130.

\bibitem {CN}
X.~Carvajal and W.~Neves, {\it Persistence of solutions to higher order nonlinear Schr\"odinger equation}, J. Differential Equ. {\bf 249} (2010), 2214--2236.

\bibitem{CP}
X.~Carvajal and M.~Panthee, {\it Unique continuation for a higher order nonlinear Schr\"odinger equation}, J. Math. Anal. Appl. {\bf 303} (2005), 188--207.

\bibitem{CCFSV}
M.M.~Cavalcanti, W.C.~Corr\^ ea, A.V.~Faminskii, M.A.~Sepulv\'eda C., and R.~V\'ejar-Asem, {\it Well-posedness and asymptotic behavior of a generalized higher order nonlinear Schr\"{o}dinger equation with localized dissipation}, Comp. Math. Appl. {\bf 96} (2021), 188--208.

\bibitem{Chen} 
M.~Chen, {\it Stabilization of the higher order nonlinear Schr\"odinger equation with constant coefficients}, Proc. Indian Acad. Sci. (Math. Sci.) {\bf 128} (2018), Art.~39. 

\bibitem{CCPV}
V.~Ceballos, J.~Carlos, F.~Pavez, and O.P. Vera Villagran, {\it Exact boundary controllability for higher order nonlinear Schr\"odinger equations with constant coefficients}. Electronic J. Differential Equ. {\bf 2005} (2005), No. 122, 1--31.

\bibitem{F12} 
A.~V.~Faminskii, {\it Weak solutions to initial-boundary-value problems for quasilinear evolution equations of an odd order}, Adv. Differential Equ. {\bf 17} (2012), 421--470.

\bibitem{F23}
A.~V.~Faminskii, {\it The higher order nonlinear Schr\"odinger equation with quadratic nonlinearity on the real axis}, Adv. Differential Equ. {\bf 28} (2023), 413--466.

\bibitem{Fib}
G.~Fibich, {\it Adiabatic law for self-focusing of optical beams}, Opt. Lett. {\bf 21} (1996), 1735--1737.

\bibitem{HK}
A.~Hasegawa and Y.~Kodama, {\it  Nonlinear pulse propagation in a monomode dielectric guide}, IEEE J. Quantum Electron {\bf 23} (1987), 510--524.

\bibitem{Kod} 
Y.~Kodama, {\it Optical solitons in a monomode fiber}, J. Stat. Phys. {\bf 39} (1985), 597--614.

\bibitem{KC}
H.~Kumar and  F.~Chand, {\it Dark and bright solitary waves solutions of the higher order nonlinear Schr\"odinger equation with self-steeping and self-frequency shift effects}, J. Nonlinear Opt. Phys. Mater. {\bf 22} (2013), Art.~1350001.

\bibitem{Laurey} 
C.~Laurey, {\it The Cauchy problem for a third order nonlinear Schr\"odinger equation}, Nonlinear Anal. Theory Methods Appl. {\bf 29} (1997), 121--158.

\bibitem{Sim} 
J. Simon, {\it Compact sets in the space $L^p(0,T;B)$}, Ann. Mat. Pura Applicada, 4.~ser., {\bf 146} (1987), 65--96.

\bibitem{Staf}
G.~Staffilani, {\it On the generalized Korteweg--de~Vries-type equations}, Differential Integral Equ. {\bf 10} (1997), 777--796.

\bibitem{Tak}
H. Takaoka, {\it Well-posedness for the higher order nonlinear Schr\"odinger equation}, Adv. Math. Sci. Appl. {\bf 10} (2000), 149--171.

\end{thebibliography}
\end{document}